    \newtheorem{theorem}{Theorem}[section]
    \newtheorem{proposition}[theorem]{Proposition}
    \newtheorem{lemma}[theorem]{Lemma}
    \newtheorem*{Conjecture}{Conjecture}
    \newtheorem*{conj3}{Iwasawa Main Conjecture}
    \newtheorem*{Theorem}{Main Theorem}
\theoremstyle{definition}
    \theoremstyle{remark}
    \newtheorem{remark}[theorem]{Remark}
    \numberwithin{equation}{section}
\def\Alphabet{A,B,C,D,E,F,G,H,I,J,K,L,M,N,O,P,Q,R,S,T,U,V,W,X,Y,Z}%  Capitalized Alphabet
\def\alphabet{a,b,c,d,e,f,g,h,i,j,k,l,m,n,o,p,q,r,s,t,u,v,w,x,y,z}%	 uncapitalized alphabet
\def\endpiece{xxx}%									marks end of list
\def\makeAlphabet[#1]{\expandafter\makeA#1,xxx,}%		 Ex. \makeAlphabet[A,B]
\def\makealphabet[#1]{\expandafter\makea#1,xxx,}%		 Ex. \makealphabet[c,d]
\def\makeA#1,{\def\temp{#1}\ifx\temp\endpiece\else%
\mkbb{#1}\mkfrak{#1}\mkbf{#1}\mkcal{#1}\expandafter\makeA\fi}%
\def\makea#1,{\def\temp{#1}\ifx\temp\endpiece\else\mkfrak{#1}\mkbf{#1}\expandafter\makea\fi}%
\def\mkbb#1{\expandafter\def\csname bb#1\endcsname{\mathbb{#1}}}%      Define bb
\def\mkfrak#1{\expandafter\def\csname fr#1\endcsname{\mathfrak{#1}}}%    Define frak
\def\mkbf#1{\expandafter\def\csname b#1\endcsname{\mathbf{#1}}}%           Define bold letters
\def\mkcal#1{\expandafter\def\csname c#1\endcsname{\mathcal{#1}}}%       Define calligraphy
\def\makeop[#1]{\xmakeop#1,xxx,}%					Ex. \makeop[Hom,Spec]
\def\mkop#1{\expandafter\def\csname #1\endcsname{{\mathrm{#1}}}} %
\def\xmakeop#1,{\def\temp{#1}\ifx\temp\endpiece\else\mkop{#1}\expandafter\xmakeop\fi}%
\begin{document}

%--- The Title ------------------------------------------------------------------------------------------------------------------------

\title[ENTC for almost abelian extenstions]{The local equivariant Tamagawa number conjecture for almost abelian extensions}
\author[Johnson-Leung]{Jennifer Johnson-Leung}
\address{Department of Mathematics, University of Idaho, Moscow, Idaho}
\email{jenfns@uidaho.edu}
\thanks{The author was supported in part by an NSA Young Investigator's Award}

\subjclass[2000]{Primary 11G40}

\date{\today}

\begin{abstract}
 We prove the local equivariant Tamagawa number conjecture for the motive of an abelian extension of an imaginary quadratic field with the action of the Galois group ring for all split primes $p\neq 2, 3$ at all integer values $s<0$.
\end{abstract}
\maketitle

%%%%%%%%%%%%%%%%%%%%%%%%%%%%%%%%%%%%%%%%%%%%%%%%%%%
\section{Introduction}
Since Dirichlet's remarkable proof of the analytic class number formula in the first half of the nineteenth century,  conjectures on the relationship between the values of $L$-functions and invariants of arithmetic objects have motivated a great deal of research.  The equivariant Tamagawa number conjecture (ETNC)  is a unifying statement concerning the special values of motivic $L$-functions which encompasses both the Birch and Swinnerton-Dyer conjecture and the generalized Stark conjectures.  It is a deep and sweeping assertion which has yielded to proof in very few cases.  The Tamagawa number conjecture builds on the conjectures of Beilinson \cite{Bei},  predicting that the $L$-values of smooth projective varieties over $\bbQ$ are given by period integrals and regulator maps, up to a rational factor, $q$.   Bloch and Kato \cite{BK}, further predicted that the rational number $q$ is given in terms of Tamagawa numbers and the order of a certain Tate-Shafarevic group.  This conjecture was reformulated by 
Fontaine and Perrin-Riou \cite{FPR} in a language that was naturally extended to motives with extra symmetries by Burns and Flach \cite{BF2, BF3}.  There are two equivalent formulations of the conjecture.  The first is a global formulation that concerns of the vanishing of a certain element in relative $K$-theory.  The second is a local formulation that concerns the equality of two lattices.
\par
In this paper, we study the conjecture for the motive of an abelian extension of an imaginary quadratic field.  We call these almost abelian extensions because there are many similarities to the case of absolutely abelian extensions stemming from the fact that an imaginary quadratic field has only one archimedean place.  Notice that as this place is complex, the local conjecture at the prime 2 will be less complicated than in the case of absolutely abelian extensions \cite{F4}.  However, we do not consider the prime 2 in this paper.  Our main result is a proof of the local ETNC at all split primes $\ell\nmid 6$ at negative integer values of the $L$-function.  Bley also considers the case of abelian extensions of imaginary quadratic fields for the $L$-value at 0  \cite{Bl}.  His proof has restrictions similar to those in this work stemming from the vanishing of the $\mu$-invariant of a certain Iwasawa module.  It would be quite nice to prove compatibility of the conjecture with the functional equation for 
this class of motives as well, as the combination of these results would give the conjecture at any integer value of the $L$-function.
\par
The only completely proven case of the equivariant Tamagawa number conjecture is the proof of Burns, Flach, and Greither for abelian extensions of $\bbQ$ \cite{BG, BF4, F4}.  Huber and Kings proved independently a slightly weaker version of this cyclotomic case \cite{HK2} which has since been strengthened to a full proof by work of Witte \cite{Wi}.  
Even partial results are quite more sparse.  Burns and Flach give a proof for an infinite family of quaternion extensions \cite{BF3}, and Navilarekallu gives a method of proof for $A_4$ extensions which he employs for a specific case \cite{Na}. There are also several theorems that are not equivariant.  Gealy recently proved a weakened version of the Tamagawa number conjecture for modular forms of  weight greater than 1 \cite{Ge}.  Kings also proved a weakened version for elliptic curves with CM by an imaginary quadratic field of class number 1 \cite{K2}.   In both of these cases, the conjecture must be weakened because it is not known that the motivic cohomology groups are finitely generated.  By working with the constructible part of the group, however, a proof can be given.  Bars builds on work of Kings to give some non-equivariant results for Hecke characters of imaginary quadratic fields \cite{Ba}.  The survey papers of Flach \cite{F2, F3} include a nice formulation of the local version of the 
equivariant Tamagawa number conjecture for arbitrary motives over $\bbQ$ and discusses the proven cases.  We strive here to keep notation consistent with this overview.  
\par
This paper is an improvement of the main result in the author's thesis, and so many thanks are due her thesis advisor, Matthias Flach.  She would also like to thank Werner Bley, Matthew Gealy, and Guido Kings for very helpful conversations and the referee for a careful reading of the manuscript.
\subsection{Notation}  
Let $K$ be an imaginary quadratic field with ring of integers $\cO_K$ and let $\frf$ be an integral ideal of $\cO_K$.  We will let $K(\frf)$ denote the ray class field of $K$ of conductor $\frf$. By a $CM$ pair of modulus $\frf$ over a number field $F$, we mean a pair $(E,\alpha)$ where $E$ is an elliptic curve over $F$ with complex multiplication by $\cO_K$ and such that the inculsion of $\cO_K$ into $F$ factors through $\End(E)$, and $\alpha\in E(\bbC)$ is a primitive $\frf$-division point.  By \cite[15.3.1]{Ka}, there is a CM pair of modulus $\frf$ over $K(\frf)$ which is isomorphic to $(\bbC/\frf,1\bmod\frf)$ over $\bbC$.  This pair is unique up to isomorphism, and whenever $\cO_K^\times\rightarrow(\cO_K/\frf)^\times$ is injective the isomorphism is unique.  Denote $(\bbC/\frf, 1\bmod \frf)$ the canonical CM pair. 
\par
We will make repeated use of the graded determinant functor $\Det$ of Knudsen and Mumford \cite{KM}.  Let $R$ be a commutative ring, and $P$ a projective $R$-module.
The determinant of $P$ is the invertible $R$-module
$$\Det_RP:=\bigwedge_{R}^{\mbox{rk}_{R}{P}}P.$$
If $C: \cdots\rightarrow P^{i-1}\rightarrow P^i\rightarrow P^{i+1}\rightarrow\cdot\cdot\cdot$ is a perfect complex of projective $R$-modules,
the determinant of the complex is defined to be the graded invertible $R$-module 
$$
\Det_RC:=\bigotimes_{i\in\bbZ}\Det_R^{(-1)^i}P^i
$$
and depends only on the quasi-isomorphism class of $C$.  Indeed, if the cohomology groups $H^i(C)$ are all perfect, one has
$$
\Det_RC=\bigotimes_{i\in\bbZ}\Det_R^{(-1)^i}H^i(C).
$$
\section{The Main Theorem}
\noindent Let $F$ be an abelian extension of $K$ with Galois group $G$. 
We consider the Chow motive $M=h_0(\Spec(F))(j)$ where $j$ is a negative integer. 
\subsection{The local statement of the ETNC} We will formulate the equivariant Tamagawa number conjecture (ETNC) for this motive.  $M$ carries an action of the semisimple $\bbQ$-algebra $A=\bbQ[G]$.  We study $M$ via its realizations and the action of $A$ on these spaces, focusing on the Betti realization 
$$M_B=H^0(\Spec(F)(\bbC), \bbQ(j)),$$
 which carries an action of complex conjugation, the de Rham realization 
$$M_{dR}=H^0_{dR}(\Spec(F)/\bbQ)(j),$$
 with its Hodge filtration, and the $\ell$-adic realization 
$$M_\ell=H^0_{et}(\Spec(F)\times_{\bbQ}\bar{\bbQ},\bbQ_\ell(j))),$$
 which is a continuous representation of $\Gal(\bar{\bbQ}/\bbQ)$.
% Fix a number field $E$ which contains all the values of the characters in $\hat{G}$.  
The $A$-equivariant $L$-function of $M$ is defined via an Euler product
$$
L(_{A}M,s)=\prod_p\Det_A(1-\Frob_p^{-1}\cdot p^{-s}\mid M_\ell^{I_p})^{-1}.
$$
The leading term of the Taylor expansion at $s=0$ decomposes over the characters of $G$
$$
L^*(_{A}M)=(L'(\eta,j))_{\eta\in\hat{G}}\in (A\otimes_\bbQ\bbR)^\times.
$$
 We can now introduce one of the key objects in this formulation of the Tamagawa number conjecture: the {\em fundamental line} is the $A$-module
$$
\Xi(_AM)=\Det_A(K_{1-2j}(\cO_F)^*\otimes_\bbZ\bbQ)\otimes_A\Det_A^{-1}(M_B^+),
$$
where $+$ denotes the invariants under complex conjugation and $K_{1-2j}(\cO_F)^*$ is the dual of the algebraic $K$-group $K_{1-2j}(\cO_F)=K_{1-2j}(F)$.  This ``line'' is the tool which enables the comparison of the $L$-value with algebraic invariants of the number field.
\par
Borel's regulator \cite{Bor}, is an isomorphism
\[ K_{1-2j}(\cO_{F})\otimes_{\bbZ}\bbR\xrightarrow{\rho_\infty}
\left(\bigoplus_{\sigma\in\cT}\bbC/\bbR\cdot(2\pi i)^{1-j}\cdot \sigma\right)^+\]
where $\cT=\Hom(F,\bbC)$.  Since $j<0$, $K_{1-2j}(\cO_{F})\simeq K_{1-2j}(F)$. For an element, $\sum_{\sigma\in\cT}x_\sigma\cdot\sigma$, the Galois group acts via
$
g\cdot\left(\sum_{\sigma\in\cT}x\cdot \sigma\right)=\sum_{\sigma\in\cT}x\cdot g^{-1}\sigma.
$
With this action, $\rho_\infty$ is $A$-equivariant just as in the case of the Dirichlet regulator.  Now,the $\bbR$-dual of $\left(\bigoplus_{\sigma\in\cT}\bbC/\bbR\cdot(2\pi i)^{1-j}\cdot \sigma\right)^+$ is identified with $M_{B}^+\otimes_\bbQ\bbR$ by taking invariants in the $\Gal(\bbC/\bbR)$-equivariant perfect pairing
\[ \bigoplus_{\sigma\in\cT}\bbR\cdot(2\pi
i)^j \times \bigoplus_{\sigma\in\cT}\bbC/\bbR \cdot(2\pi i)^{1-j}\to
\bigoplus_{\sigma\in\cT}\bbC/ 2\pi
i\cdot\bbR\xrightarrow{\Sigma}\bbR\] induced by multiplication.  
Hence, the dual of the Borel regulator induces an $A$-equivariant isomorphism 
$$
\vartheta_\infty:A\otimes_\bbQ\bbR\rightarrow\Xi(_AM)\otimes_\bbQ\bbR.
$$
Note that the $L^*(_AM)$ lies in the domain of this isomorphism, and Gross conjectured that its image lies in the rational space $\Xi(_AM)\otimes_\bbQ1$ \cite{N}.  This ``Stark-type'' conjecture says that up to a rational factor, the $L$-value is given by the Borel regulator and was proved by Deninger \cite{D2} in his work on the Beilinson conjectures for Hecke characters of imaginary quadratic fields.  
\par
Fix a prime number $\ell$, let $S$ be a set of primes containing $\ell$, $\infty$, and the primes which ramify in $F$, and let $A_\ell=A\otimes_\bbQ\bbQ_\ell$.  We now concern ourselves with the $\ell$-part of the rational factor by considering the isomorphism induced by the Chern class map and the cycle class map
$$
\vartheta_\ell:\Xi(_AM)\otimes_\bbQ A_\ell\rightarrow\Det_{A_\ell}(R\Gamma_c(\bbZ[\frac{1}{S}],M_\ell)),
$$
where the right hand side denotes the cohomology with compact supports as defined by the mapping cone
$$
R\Gamma_c(\bbZ[\frac{1}{S}],M_\ell)\rightarrow R\Gamma(\bbZ[\frac{1}{S}],M_\ell)\rightarrow\bigoplus_{p\in S}R\Gamma(\bbQ_p,M_\ell),
$$
The conjecture then compares a natural lattice in the right hand side of this isomorphism to the lattice generated by the image of $L^*(_AM)$.  To construct the lattice, we choose the order $\bbZ[G]$ in $A$ and the $\Gal(\bar{\bbQ}/\bbQ)$-stable projective $\bbZ_\ell[G]$-lattice 
$$
T_\ell=H^0(\Spec(F\otimes_\bbQ\bar{\bbQ}),\bbZ_\ell(j)).
$$
\begin{Conjecture}[Local ETNC]
 There is an equality of lattices 
$$
\vartheta_\ell\vartheta_\infty(L^*(_AM)^{-1})\cdot\bbZ_\ell[G]=\Det_{\bbZ_\ell[G]}R\Gamma_c(\bbZ[\frac{1}{S}],T_\ell).
$$
inside of $\Det_{A_\ell}R\Gamma_c(\bbZ[\frac{1}{S}], M_\ell)$.
\end{Conjecture}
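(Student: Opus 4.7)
The plan is to deduce the conjectured equality of $\bbZ_\ell[G]$-lattices by Iwasawa descent from an analogous $\Lambda$-equivariant equality over a $\bbZ_\ell^2$-extension of $K$. Fix a split prime $\ell\nmid 6$ of $K$, write $\ell\cO_K=\frp\bar\frp$, let $K_\infty/K$ be the unique $\bbZ_\ell^2$-extension, and set $F_\infty=F\cdot K_\infty$ and $\Lambda=\bbZ_\ell[[\Gal(F_\infty/K)]]$. Work with the tower $T_\infty=\prolim_n H^0_{et}(\Spec(F_n\otimes_\bbQ\bar\bbQ),\bbZ_\ell(j))$ so that $T_\ell$ is recovered from $T_\infty$ by tensoring down. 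The goal is an equality of $\Lambda$-determinants inside the relevant Iwasawa-theoretic compactly supported cohomology which, once descended, reproduces the local ETNC at each finite layer.

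\textbf{Input from the main conjecture.} On the analytic side the key object is the Katz two-variable $\ell$-adic $L$-function $\cL_\ell\in\Lambda$, which interpolates the Hecke $L$-values $L(\eta,j)$ that enter into $L^*(_AM)$. On the arithmetic side I use the elliptic units of Robert--de~Shalit--Kato: their inverse limit along the tower produces a norm-compatible system of global units. Kato's explicit reciprocity law identifies the image of these elliptic units under the Coleman/Perrin-Riou dual exponential map with $\cL_\ell$ up to an explicit period factor. Rubin's main conjecture for imaginary quadratic fields (in its equivariant refinement) then asserts that the characteristic ideal of the elliptic-unit quotient of global units is generated by $\cL_\ell$. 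Combining the two produces the required $\Lambda$-equality of determinants.

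\textbf{Descent and archimedean compatibility.} I next descend from $\Lambda$ to $\bbZ_\ell[G]$ via the standard Bockstein/Tate--Poitou spectral sequence relating $R\Gamma_c(\bbZ[\tfrac{1}{S}],T_\ell)$ to the coinvariants of the Iwasawa-theoretic cohomology. To produce the archimedean half of the statement --- i.e.\ the image of $L^*(_AM)^{-1}$ under $\vartheta_\infty$ --- I invoke Deninger's theorem on the Beilinson conjecture for Hecke characters of $K$, which expresses $L'(\eta,j)$ in terms of the Borel regulator applied to an explicit $K$-theoretic element constructed from the same elliptic units. Thus both sides of $\vartheta_\ell\vartheta_\infty(L^*(_AM)^{-1})$ refer, up to rational factors that are absorbed by the Euler system, to a single elliptic-unit class; the remaining work is the direct computation that $\vartheta_\ell$ applied to this class gives the $\bbZ_\ell[G]$-basis of $\Det_{\bbZ_\ell[G]}R\Gamma_c(\bbZ[\tfrac{1}{S}],T_\ell)$ produced by the descent.

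\textbf{Main obstacle.} The principal technical hurdle is the descent step: converting the $\Lambda$-equality (a priori an equality of fractional $\Lambda$-ideals in a total ring of fractions) into an honest equality of $\bbZ_\ell[G]$-lattices requires that a certain $\Lambda$-module have no nontrivial $\mu$-invariant and that higher Iwasawa cohomology vanish after descent. This is precisely where the hypotheses $\ell\neq2,3$ and $\ell$ split enter, via the $\mu=0$ theorems of Gillard and Oukhaba for elliptic units. A secondary difficulty is the bookkeeping needed to glue Kato's $\ell$-adic reciprocity law to Deninger's archimedean regulator formula with matching normalisations for arbitrary negative $j$; this is delicate but, given the uniform construction of elliptic units in both contexts, ultimately tractable.
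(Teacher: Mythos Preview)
Your global architecture---equivariant main conjecture over a $\bbZ_\ell^2$-tower, then descent, with Deninger supplying the archimedean regulator and elliptic units the Euler system---matches the paper's.  The $\ell$-adic bridge you propose, however, is genuinely different from the paper's, and one of your steps does not work as stated.

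The paper never invokes the Katz $p$-adic $L$-function or an explicit reciprocity law.  Instead it constructs explicit motivic classes $\xi_\frf(j)\in H^1_\cM(K(\frm),1-j)$ as Eisenstein symbols on torsion points of a CM elliptic curve, projected via a ``Kronecker map'' $\cK_\cM$.  Deninger's computation gives $\rho_\infty(\xi_\frf(j))$ in terms of $L'(\bar\chi,j)$.  For the $\ell$-adic side the paper uses the Huber--Kings identification of the \'etale Eisenstein class with a pullback of the elliptic polylogarithm sheaf, together with Kings' explicit description of that pullback via Kato's theta-functions ${}_\fra\Theta_E$, to compute $\rho_{et}(\xi_\frf(j))$ directly as a norm-compatible system of elliptic units cup $\zeta_{\ell^n}^{\otimes -j}$.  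Both regulators are thus evaluated on the \emph{same} motivic class, and elliptic units appear without Coleman maps.  The main conjecture is taken in the determinant form $\Lambda\cdot\cL=\Det_\Lambda\Delta^\infty$ of [JLK], with $\cL$ built from the elliptic-unit system itself, and descent is a $\kappa^j$-twisting lemma plus a character-by-character comparison.

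Your route has a gap at the interpolation step.  For $j<0$ the Artin $L$-functions $L(\chi,s)$ have simple \emph{zeros} at $s=j$, so $L^*({}_AM)=(L'(\eta,j))_{\eta}$ consists of derivatives; these lie outside the critical interpolation range of the Katz measure, and $\cL_\ell$ does not ``interpolate the Hecke $L$-values $L(\eta,j)$'' in any direct sense.  Recovering $L'(\eta,j)$ from a specialization of $\cL_\ell$ would require a leading-term/trivial-zero analysis, which is considerably more delicate than the polylogarithm computation.  Likewise the dual exponential in Kato's reciprocity law governs the critical regime; what is actually needed at negative $j$ is the Soul\'e/\'etale Chern class regulator, and that is precisely what the polylogarithm formula delivers.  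Your identification of the $\mu$-invariant as the essential obstruction to descent is correct and agrees with the paper.
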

\noindent The ETNC for number fields is equivalent to the statement that the local conjecture holds at every prime number $\ell$.  This determines $L^*(_AM)$ up to a unit in $\bbZ[G]$. Notice that the ETNC depends on the choice of order but is independent of the choice of $S$ and $T_\ell$ \cite{F1}.  This indepence of lattice is exploited to prove the main results of \cite{JLK} which will be an important ingredient in the proof of our main theorem.  

\begin{Theorem} Let $F$ be an abelian extension of an imaginary quadratic field $K$ with Galois group $G$.  Then the local equivariant Tamagawa number conjecture is valid for the motive $h^0(\Spec(F))(j)$ for $j<0$ at every rational prime $p\nmid 6$ which splits in $K$. 
\end{Theorem}
\begin{remark} The restriction to split primes can be lifted whenever the $\mu$-invariant of a certain Iwasawa module can be shown to vanish, as discussed in Section \ref{Iwasawa}.
\end{remark}

\subsection{Proof strategy} We first reduce to the case that $F=K(\frm)$ is the ray class field of conductor $\frm$ where the only root of unity in $K$ which is congruent to 1 modulo $\frm$ is 1 by applying the general functoriality result of Burns and Flach \cite[Prop 4.1 b)]{BF2}. Let $G_\frm$ denote the Galois group $\Gal(K(\frm)/K)$.    
The conjecture asserts an equality of rank 1 $\bbZ_\ell[G_\frm]$-modules inside of $\Det_{A_\ell}R\Gamma_c(\bbZ[\frac{1}{S}], M_\ell)$.  Our strategy is to compute a generator of each of these modules.  By a rational character of $G_\frm$, we mean an $\Aut(\bbC)$ orbit of complex characters of $G_\frm$.  The group ring $A=\bbQ[G_\frm]$, splits as a product of number fields indexed by these rational characters $\chi$ of $G_\frm$, and thus $A_\ell$ splits as well.  Hence, it suffices to compare our generators character by character.  In section \ref{elements} we compute the image of $L^*(_AM)$ under the composition $\vartheta_\ell\vartheta_\infty$.  In section \ref{Iwasawa} we compute a basis of $\Det_{\bbZ_\ell[G]}R\Gamma_c(\bbZ[\frac{1}{S},T_\ell)$ 
via descent from the Iwasawa main conjecture and show that it coincides with the image of $L^*(_AM)$, completing the proof.
\section{The image of the $L$-value}\label{elements}
Let $\chi$ be a character of $G_\frm$ of conductor $\frf_\chi$.  If $\frf_\chi\neq\frm$, then $\chi$ is induced from a character of $G_{\frf_\chi}$.  The Dirichlet $L$-function of $\chi$ differs from the Artin $L$-function of $\chi$ as a $G_\frm$ representation by a finite number of Euler factors
$$
L_{D}(\chi,s)=\left(\prod_{p\mid\frm, p\nmid f_\chi}(1-\chi(\frp)\cN\frp^{-s})\right)L(\chi,s).
$$
 In \cite{D1,D2} Deninger constructs elements in motivic cohomology in order to prove the Beilinson conjecture for Hecke characters of an imaginary quadratic field.  We will use these motivic elements to prove our main theorem, but as we seek a finer result about the value $L^*(_AM)$, we will have to revisit some details of the proof as well.  We first translate our question to the setting of Hecke characters by twisting $\chi$ by the norm character of $K$ to obtain a Hecke character of weight 2,
$$
\varphi_\chi=\chi N_{K/\bbQ}.
$$
Let $E$ be an elliptic curve defined over $K(\frm)$ with complex multiplication by $\cO_K$ where $E$ has the additional property that the Serre-Tate character factors through the norm map from $K(\frm)$ to $K$.  Let $\cA=R_{K(\frm)/K}E$ be the Weil restriction of the elliptic curve.  Then $\cA$ is an abelian variety over $K$ with CM by a semisimple $K$-algebra $T$ and  with Serre-Tate character $\varphi_{\cA}$.  Deninger proves that any Hecke character $\varphi$ of weight $w>0$ is of the form 
$$
\prod_{i=1}^w\lambda_i\circ\varphi_{\cA}=\prod_{i=1}^w\varphi_{\lambda_i}
$$
where $\lambda_i\in\Hom(T,\bbC)$, \cite[Prop 1.3.1]{D2}.  We choose once and for all a type $(1,0)$ character $\varphi$ with $N_{K/\bbQ}=\varphi\bar{\varphi}$ and take $\frm$ to be a multiple of the conductor of $\varphi$.  Then we have
$$
\varphi_\chi=\varphi_{\lambda_1}\varphi_{\lambda_1}
$$
and $\frm$ is a multiple of the conductors of $\lambda_1$ and $\lambda_2$.

\subsection{Torsion points} Deninger computes the special values of the $L$-function of such Hecke characters in terms of torsion points on $E$.  Let $\frf$ be an ideal of $\cO_K$ and let $\rho_\frf\in\bbA^*_K$ be an id\`ele with ideal $\frf$.  Choose an approximation $f_\frf\in K^*$ with
\begin{equation}\label{approximation}
\begin{array}{lr}
v_\frp(f_\frf)\leq0& \text{if} \, \frp\nmid\frf\\
v_{\frp}(f^{-1}_\frf-(\rho_\frf)^{-1}_\frp)\geq0& \text{if} \, \frp\mid\frf.
\end{array}
\end{equation}
We also fix an isomorphism 
$$
\theta_E:\cO_K\simeq\End_{K(\frm)}(E)
$$
such that $\theta^*_E(k)\omega=k\omega$ for all $\omega\in H^0(E,\Omega^1_{E/K(\frm)})$ and an embedding $\tau_0$ of $K(\frm)$ into $\bbC$ such that $j(E)=j(\cO_K)$.  Then we have a complex isomorphism 
$$
E(\bbC)\simeq\bbC/\Gamma\quad \text{where} \quad \Gamma=\Omega\cO_K
$$
for some $\Omega\in\bbC$.  This choice is non-canonical and determines a class in the Betti cohomology of $E$.  If $z\in\bbC$, we let $([z])$ denote the point on $E$ under this isomorphism.  Now ${}_\frf\beta=([\Omega f_\frf^{-1}])$ is a point in $E[\frf]$ which is rational over $K(\frf)$.
\par
Fix a set of ideals 
$$
\{\frb_g\subseteq\cO_K\}_{g\in G_\frm}
$$
with Artin symbol $(\frb_g,K(\frm)/K)=g\in G_\frm$.
For $g\in G_\frm$, let ${}^gE$ be the curve obtained by base change according to the diagram
$$
\xymatrix{
{^g}E \ar[r]\ar[d]& E\ar[d] \\
\Spec(K(\frm))\ar[r]^{g^*} & \Spec(K(\frm)) 
}
$$
We denote the period lattice of ${}^gE$ by $\Gamma_g$.
Following Deninger, for any ideal $\fra\subseteq\cO_K$ which is prime to the conductors of $\varphi_{\lambda_1}$ and $\varphi_{\lambda_2}$ we define $\Lambda(\fra)\in K(\frm)^\times$ by
$$
\varphi_\cA(\fra)^*\omega^{(\fra,K(\frm)/K)}=\Lambda(\fra)\omega
$$
where $\omega^g\in H^0(E^g,\Omega^1)$ has period lattice $\Gamma_g$ and $\varphi_\cA(\fra)\in T^\times$ is viewed as an isogeny $E\rightarrow {}^{(\fra,K(\frm)/K)}E$.  Now we can consider a family of $\frf$-torsion points on the conjugates of $E$,
$$
{}_\frf\beta_g=([\Lambda(\frb_g\Omega f^{-1}_\frf)])
$$
with an action of $G_\frm$ given by ${^h}{_\frf}\beta_g={_\frf}\beta_{hg}$.
\begin{proposition}\cite[(3.4)]{D2}\label{denformula}
Let $\chi$ be a character of $\Gal(K(\frm)/K)$ of conductor $\frf_\chi=\frf$.  The Artin $L$-series $L(\chi,s)$ has a first order zero for every $s=j<0$, and the special value is given by the formula
\begin{multline*}
L'(\chi,j)=(-1)^{j}\frac{\Phi(\frf)(-j)!^2}{\Phi(\frm)}\left(\frac{\sqrt{d_K}\cN\frf}{2\pi i}\right)^{-j}\chi(\rho_\frf)\\\sum_{g\in G_\frm}\chi(g)A(\Gamma_g)^{1-j}\sum\limits_{0\neq\gamma\in\Gamma_g}\frac{({}_\frf\beta_g,\gamma)_g}{|\gamma|^{2-2j}}, 
\end{multline*}
where  $\Phi$ is the totient function, $d_K$ is the discriminant of $K$, for any $\bbZ$-basis of $\Gamma_g$ with Im($u/v)>0$, $A(\Gamma_g)=(\bar{u}v-\bar{v}u)/2\pi i)$, and 
$$
(,)_g:\bbC/\Gamma_g\times\Gamma_g\rightarrow U(1)
$$ 
given by $(z,\gamma)_g=exp(A(\Gamma_g)^{-1}(z\bar{\gamma}-\bar{z}\gamma))$ is the Pontrjagin pairing.
\end{proposition}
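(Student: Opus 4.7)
My plan is to follow Deninger's strategy: rewrite the Artin $L$-value as the value of a Hecke $L$-series of the CM abelian variety $\cA$, then identify that value, via analytic continuation of Kronecker--Eisenstein--Lerch series attached to the period lattices $\Gamma_g$, with the explicit finite sum on the right--hand side.

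First, using $\varphi_\chi = \chi \cdot N_{K/\bbQ}$, we have the identity of $L$-series
$$L(\chi, s) = L(\varphi_\chi, s+1),$$
so $L'(\chi,j)$ coincides with $L'(\varphi_\chi, j+1)$, where $j+1 \le 0$. Decompose the Hecke $L$-series by ideal classes,
$$L(\varphi_\chi, s) = \sum_{g \in G_\frm} \chi(g)\, L_g(s),$$
the partial sum $L_g(s)$ ranging over ideals in the class of $\frb_g$. Via the uniformization $E(\bbC) \simeq \bbC/\Gamma$ and its Galois twists $\Gamma_g$ governed by $\varphi_\cA(\frb_g)^*\omega^g = \Lambda(\frb_g)\omega$, each $L_g(s)$ becomes a Kronecker--Eisenstein--Lerch series attached to $\Gamma_g$, with the character of conductor $\frf$ encoded by the approximation $f_\frf$ to the id\`ele $\rho_\frf$. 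The approximation conditions ensure that $([\Lambda(\frb_g)\Omega f_\frf^{-1}])$ is precisely the $\frf$-torsion section ${}_\frf\beta_g$; the dependence on the choice of $f_\frf$ is absorbed into the scalar $\chi(\rho_\frf)$ collected at the end.

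Second, apply the analytic continuation of the Kronecker series --- equivalently, Poisson summation on $\Gamma_g$ --- to pass from the conditionally convergent series at $s=j+1$ to the absolutely convergent dual sum at argument $\ge 2$. This produces
$$\sum_{0 \ne \gamma \in \Gamma_g} \frac{({}_\frf\beta_g, \gamma)_g}{|\gamma|^{2-2j}},$$
in which the Pontryagin pairing $(\cdot,\cdot)_g$ appears as the Fourier kernel on $\bbC/\Gamma_g$ and the covolume weight $A(\Gamma_g)^{1-j}$ comes from the lattice-volume factor in Poisson summation. The first--order zero of $L(\chi,j)$ is forced by the simple pole of $\Gamma_\bbC(s)=2(2\pi)^{-s}\Gamma(s)$ at $s=j+1$, and the residue of this pole --- together with the Gamma factor at the dual point in the functional equation of the completed $L$--function --- supplies the analytic prefactor $(-1)^j(-j)!^2(2\pi i)^j$ and the conductor power $(\sqrt{d_K}\cN\frf)^{-j}$.

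Finally, compare the imprimitive $L$--series of $\chi$ as a character of $G_\frm$ with its primitive incarnation at conductor $\frf$: the missing Euler factors at primes dividing $\frm/\frf$ contribute the ratio $\Phi(\frf)/\Phi(\frm)$, while the Gauss--sum form of the functional equation for a character of conductor $\frf$ produces $\chi(\rho_\frf)$. The main obstacle is precisely this bookkeeping of normalizations --- signs, powers of $2\pi i$, the discriminant $\sqrt{d_K}$, the factor $(-j)!^2$, the ratio $\Phi(\frf)/\Phi(\frm)$, and $\chi(\rho_\frf)$ --- so that the identity holds on the nose rather than up to an uncontrolled rational factor. Keeping track of how the Galois action on the CM lattices (encoded by $\Lambda(\frb_g)$) and the approximation $f_\frf$ interact to produce a genuinely $G_\frm$--equivariant family of torsion points ${}_\frf\beta_g$ is the technical heart of the argument.
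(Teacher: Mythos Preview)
The paper does not supply its own proof of this proposition: it is stated with the citation \cite[(3.4)]{D2} and immediately used, with the comment that ``this formula is the essential first step to proving the ETNC.'' So there is nothing in the paper to compare your argument against beyond the reference to Deninger.

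Your outline is a faithful reconstruction of Deninger's method in \cite{D1,D2}: pass from the Artin $L$-series to the Hecke $L$-series of $\varphi_\chi$, decompose over ray classes, identify each partial series with a Kronecker--Eisenstein--Lerch series attached to the lattice $\Gamma_g$, and then use the functional equation/Poisson summation to land on the absolutely convergent sum with the Pontryagin pairing. One small correction on the bookkeeping: the factor $\Phi(\frf)/\Phi(\frm)$ does not come from missing Euler factors at primes dividing $\frm/\frf$. The formula is stated for the \emph{primitive} Artin $L$-function $L(\chi,s)$, and the sum on the right is over $G_\frm$ rather than $G_\frf$; since $\chi$ factors through $G_\frf$, each fibre of $G_\frm\twoheadrightarrow G_\frf$ contributes the same term, and the ratio $\Phi(\frf)/\Phi(\frm)$ (up to the $w$-factors absorbed elsewhere) compensates for this overcounting. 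The Euler-factor discrepancy between the Artin and Dirichlet $L$-functions is handled separately in the paper (see the discussion at the start of Section~\ref{elements} and the norm-compatibility in Theorem~\ref{motivic}). Apart from this point of attribution, your sketch matches the cited source.
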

This formula is the essential first step to proving the ETNC as it describes the $L$-value in terms of points on an elliptic curve.  These points can then be used to construct elements in motivic cohomology.  Notice that the formula is unchanged if we consider $\chi$ to be a rational character of $G_\frm$ in the sense that it represents equality of tuples
\begin{multline*}
(L'(\eta,j))_{\eta\in\chi}=\big((-1)^{j}\frac{\Phi(\frf)(-j)!^2}{\Phi(\frm)}\left(\frac{\sqrt{d_K}\cN\frf}{2\pi i}\right)^{-j}\eta(\rho_\frf)\\\sum_{g\in G_\frm}\eta(g)A(\Gamma_g)^{1-j}\sum\limits_{0\neq\gamma\in\Gamma_g}\frac{({}_\frf\beta_g,\gamma)_g}{|\gamma|^{2-2j}}\big)_{\eta\in\chi}, 
\end{multline*}
Where $\eta$ is a complex character which is in the orbit represented by the rational character $\chi$.

\subsection{Eisenstein Symbol} The Eisenstein symbol, originally constructed by Beilinson, is roughly a map from torsion points of an elliptic curve to the cohomology of a power of the curve.  There are several variations of Eisenstein symbol in the literature.  In particular, Deninger uses a variation for which the domain is divisors of degree zero and defines a degree zero divisor 
$$
{}_\frf\beta'={}_\frf\beta+\frac{1}{\tilde{N}^{4-2j}-1}(0)-\frac{\tilde{N}^{2-2j}}{\tilde{N}^{4-2j}-1}\sum_{p\in E(\bbC)[\tilde{N}]}(p)
 $$
to construct the motivic elements.  Here $\tilde{N}\geq2$ is an auxiliary integer.  However, this approach would cause a technical difficulty for us when we consider the $\ell$-adic regulator.  Thus, we introduce our variation in the following lemma and show that it is compatible with Deninger's construction.  Note that this is used implicitly in work of Kings \cite{K2}, which we will discuss in section \ref{ladic}.

\begin{lemma}\label{eisenstein}
Let $E$ be an elliptic curve.  For any $k>0$, there is a variation of the Eisenstein symbol $\cE_\cM^k:\bbQ[E[\frf]\setminus0]\rightarrow H^{k+1}_\cM(E^k,k+1)$ which is defined for divisors of any degree.  Moreover, 
$$\cE_\cM^k({_\frf}\beta^\prime)=\cE_\cM^k({_\frf}\beta).$$
\end{lemma}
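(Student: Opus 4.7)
The plan is to define the extension $\cE_\cM^k$ of Beilinson's Eisenstein symbol to divisors of arbitrary degree via the horospherical construction, and then to check directly that the Eisenstein symbol of the correction divisor ${}_\frf\beta'-{}_\frf\beta$ vanishes.

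For the construction I would follow the horospherical formalism as reformulated by Kings in \cite{K2}. The horospherical map
\[
\rho^k:\bbQ[E[\frf]\setminus 0]\to H^{k+1}_\cM(U^k,k+1),\quad U:=E\setminus 0,
\]
is $\bbQ$-linear and requires no degree-zero hypothesis on its source. Composing with the symmetrization idempotent for $\Sigma_k\ltimes\{\pm 1\}^k$ and then extending classes from $U^k$ to $E^k$ defines $\cE_\cM^k$. Its restriction to the augmentation kernel coincides with Beilinson's original Eisenstein symbol, since the degree-zero hypothesis in the classical setup is used only to rewrite the output in terms of Siegel units on the universal modular curve, and plays no role in the intermediate horospherical class.

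The compatibility statement then reduces to showing that $\cE_\cM^k$ annihilates the divisor
\[
D=\frac{1}{\tilde N^{4-2j}-1}(0)-\frac{\tilde N^{2-2j}}{\tilde N^{4-2j}-1}\sum_{p\in E[\tilde N]}(p).
\]
For the first term, the horospherical formula assigns the zero class to the delta function at the origin, so one has $\cE_\cM^k((0))=0$ (and indeed this is how one extends the domain to include $(0)$ in the first place). For the second term, I would invoke the distribution relation for the Eisenstein symbol under the isogeny $[\tilde N]:E\to E$: at the level of divisors $[\tilde N]_*\bigl(\sum_{p\in E[\tilde N]}(p)\bigr)=\tilde N^2\cdot(0)$, and the Eisenstein symbol transforms by a precise power of $\tilde N$ dictated by its weight $k+1$ in Deninger's normalization, so that the second term is proportional to $\cE_\cM^k((0))=0$. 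The coefficients $c_1,c_2$ in $D$ are chosen exactly so that the two contributions cancel by $\bbQ$-linearity of $\cE_\cM^k$.

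The main obstacle will be pinning down the precise normalization of the extension: one must check that the horospherical construction and Beilinson's degree-zero definition assign the same motivic class to ${}_\frf\beta'$ (and not merely classes that agree up to some sign or torsion), and that the $\tilde N$-eigenvalue in the distribution relation matches the exponents $4-2j$ and $2-2j$ appearing in Deninger's correction. Once these bookkeeping conventions are aligned between \cite{D2} and \cite{K2}, the lemma follows formally from the linearity and functoriality of $\cE_\cM^k$; the substance of the assertion is that Deninger's ad hoc correction becomes invisible in the extended framework.
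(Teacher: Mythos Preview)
Your overall plan---extend the Eisenstein symbol to arbitrary-degree divisors via the horospherical map and then show that the correction divisor $D={}_\frf\beta'-{}_\frf\beta$ lies in its kernel---is exactly the paper's approach. The gap is in how you execute the second step.

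You assert that $\cE_\cM^k((0))=0$ because ``the horospherical formula assigns the zero class to the delta function at the origin,'' and then conclude that both summands of $D$ vanish separately. This is false. In the explicit Schappacher--Scholl form of the horospherical map,
\[
\varrho^k(\psi)(g)=\frac{N^k}{k!(k+2)}\sum_{t}\psi(g^{-1}t)\,B_{k+2}\!\left(\frac{t_2}{N}\right),
\]
the delta function at the origin is sent to a nonzero multiple of $B_{k+2}(0)=B_{k+2}$. Since $k=-2j$ is a positive even integer, $k+2\geq 4$ is even and the Bernoulli number $B_{k+2}$ is nonzero. Thus $\varrho^{-2j}((0))\neq 0$, and by the same token $\varrho^{-2j}\bigl(\sum_{p\in E[\tilde N]}(p)\bigr)\neq 0$. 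Your parenthetical remark that ``this is how one extends the domain to include $(0)$'' is therefore also mistaken: the extension is not by decree but by evaluating the same Bernoulli-polynomial formula, which makes sense for any divisor once $k>0$.

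What the paper actually does is compute both terms and exhibit a cancellation. The distribution relation for Bernoulli polynomials yields
\[
\varrho^{-2j}\!\left(\sum_{p\in E[\tilde N]}(p)\right)=\tilde N^{-(2-2j)}\,\varrho^{-2j}((0)),
\]
and Deninger's coefficients $\frac{1}{\tilde N^{4-2j}-1}$ and $\frac{\tilde N^{2-2j}}{\tilde N^{4-2j}-1}$ are chosen precisely so that these two \emph{nonzero} contributions cancel. Your final sentence gestures at this cancellation, but it is inconsistent with your earlier claim that each term vanishes on its own; the argument must be carried out as a genuine cancellation, and for that you need the explicit exponent coming from the Bernoulli distribution relation matched against Deninger's normalization.
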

\begin{proof}
For $N=\cN\frf\geq3$, let $M$ be the modular curve parameterizing elliptic curves with full level $N$ structure, and let $\mathfrak{E}$ be the universal elliptic curve over $M$.  Choose a level $N$ structure on $E$, $\alpha:(\bbZ/N\bbZ)^2\stackrel{\sim}{\rightarrow}E[N]$, rational over some extension $K^\prime$ of $K(\frm)$.  By the universality of $\mathfrak{E}$, we have the following diagram depending on the choice of level-$N$ structure.
$$
\xymatrix {
E\ar[r]^{\alpha^*}\ar[d]&\mathfrak{E}\ar[d]\\
\Spec(K^\prime)\ar[r]&M
}
$$
Denote by $\tilde{\mathfrak{E}}^0$ the fiber over the cusps of the connected component of the generalized elliptic curve over the compactification of $M$.  Then we can define ${\rm Isom}={\rm Isom}(\bbG_m,\tilde{\mathfrak{E}}^0_{\rm Cusp})$.  ${\rm Isom}$ is a $\mu_2$ torsor over the subscheme of cusps, and we consider the subset $\bbQ[{\rm Isom}]^{(k)}\subseteq \bbQ[{\rm Isom}]$ where $\mu_2$ acts by $(-1)^k$.  Define the horospherical map  $\varrho^k:\bbQ[\mathfrak{E}[N]]^0\rightarrow\bbQ[{\rm Isom}]^{(k)}$ explicitly by
$$
\varrho^k(\psi)(g)=\frac{N^k}{k!(k+2)}\sum_t\psi(g^{-1}t)B_{k+2}(\frac{t_2}{N}),
$$
where $t=(t_1,t_2)\in(\bbZ/N\bbZ)^2$ and $B_{k}(x)$ is the $k$th Bernoulli polynomial.  When $k>0$, $\varrho$ is well-defined for divisors of any degree.
\par
For an elliptic curve over any base, Beilinson \cite{Bei} constructs an  Eisenstein symbol $\cE_\cM^k:\bbQ[E[N]]^0\rightarrow H^{k+1}_\cM(E^k,k+1)$ which is preserved under base change.  For the universal elliptic curve $\mathfrak{E}$, we also have a boundary map 
$$
\rm{res}^k:H^{k+1}_\cM(\mathfrak{E}^k,k+1)\rightarrow\bbQ[{\rm Isom}]^{(k)}
$$
 coming from the long exact cohomology sequence, and another map also called the Eisenstein symbol 
$$
\rm{Eis}^k:\bbQ[{\rm Isom}]^{(k)}\rightarrow H^{k+1}_\cM(\mathfrak{E}^k,k+1)
$$
with $\rm{res}^k\circ\rm{Eis}^k=id$. The following diagram commutes when restricting to degree zero divisors.
$$
\xymatrix{\bbQ[\mathfrak{E}[N]]\ar[r]^{\varrho}&\bbQ[{\rm Isom}]^{(k)}\ar[r]^{\rm Eis}& H^{k+1}_\cM(\mathfrak{E}^k,k+1)\ar[d]_{\alpha^*}\\
\bbQ[E[N]]^0\ar[rr]^{\cE_\cM^k}\ar[u]^{\alpha}&& H^{k+1}_\cM(E^k,k+1)
}
$$
Indeed, the horospherical map above was computed by Schappacher and Scholl to be the composition $\cE_\cM^k\circ\rm{res}^k$ \cite{SS}.  Combining this fact with base change, the diagram commutes, and we can compute the Eisenstein symbol at torsion points on the elliptic curve. 
 Moreover, this computation does not depend on the choice of full level structure since the assignment of Eisenstein symbols commutes with the $GL_2$ action on the torsion sections and is thus invariant under the trace $Y(N)\rightarrow Y_1(N)$ \cite[Lemma 3.1.2]{K1} 
 \par
 To show that $\cE_\cM^k({_\frf}\beta^\prime)=\cE_\cM^k({_\frf}\beta)$, it suffices to show that 
 $$
 {_\frf}\beta^\prime-{_\frf}\beta\in\ker\varrho,
 $$
As the action of $\mathbb{G}_m$ preserves the identity section on the curve,
 $$
 \varrho^{-2j}(0)(g)=\frac{N^{-2j}}{(-2j)!(2-2j)}B_{2-2j}(0).
 $$
We compute
 \begin{align*}
 \varrho^{-2j}\left(\sum_{p\in E(\bbC)[a]}(p)\right)(g)&=\frac{a^{-2j}}{(-2j)!(2-2j)}\sum_{(p)=(t_1,t_2)\in(\bbZ/a\bbZ)^2}B_{2-2j}\left(\frac{t_2}{a}\right)\\
 &=\frac{a^{1-2j}}{(-2j)!(2-2j)}\sum_{i=0}^{a-1}B_{2-2j}\left(\frac{i}{a}\right).
 \end{align*}
  Moreover, the distribution relation
 $$
 B_k(X)=a^{k-1}\sum_{i=0}^{a-1}B_k\left(\frac{X+i}{a}\right)
 $$
 implies that
$$
\varrho^{-2j}\left(\sum_{p\in E(\bbC)[a]}(p)\right)(g)=\frac{1}{a^{2-2j}}\varrho^{-2j}(0)(g),
$$
which completes the proof of the lemma.
 \end{proof}

In order to study Hecke characters of $K(\frm)$, we must consider the image of the elements above in the cohomology of the number field.  To this end, Deninger constructs
the Kronecker map, $\mathcal{K}_\cM$, which is a projector given by the composition
$$
\xymatrix {
H_{\mathcal{M}}^{1-2j}(E^{-2j},1-2j)\ar[r]^{(id,\theta_E(\sqrt{d_K})^{-j,*}}\ar[dr]_{\mathcal{K}_\mathcal{M}}&H^{1-2j}_{\mathcal{M}}(E^{-j},1-2j)\ar[d]^{\pi_{-j,*}}\\
&H^1_{\mathcal{M}}(\Spec(K(\frm)),1-j),
}
$$
where the map $\pi_{-j*}$ is a proper push forward.

\subsection{Deninger's Theorem}
We now have the tools to prove the following adaptation of \cite[Theorem 3.1]{D2}.
\begin{theorem}\label{motivic}
For every ideal $\frf\mid\frm$, there are motivic elements 
$$\xi_\frf(j)\in H^1_\cM(K(\frm), 1-j)$$
with the property that if $\chi$ is a rational character of $G_\frm$ of conductor $\frf$, then
$$e_\chi(\rho_\infty(\xi_\frf(j)))=\frac{(2\cN\frf)^{-(1+j)}\Phi(\frm)}{(-1)^{1+j}(-2j)!\Phi(\frf)}L^\prime(\bar{\chi},j)\eta_\bbQ.$$
where $\eta_\bbQ=e_\chi\cdot\tau_0$ is a basis of the $\chi$-component of $\left(H^0_B(\Spec(F)(\bbC),\bbQ(j))^+\right)^*$, and 
$\Phi$ is Euler's totient function.  Moreover, the resulting elements the Betti cohomology form a norm-compatible system in the following sense:  If $\chi$ is a character of conductor $\frf$, then 
$$
e_\chi(\rho_\infty(w_\frf/w_{\frp\frf}\Tr_{K(\frp\frf)/K(\frf)}\xi_{\frf\frp}(j))=\begin{cases}
                                                                                      e_\chi(\rho_\infty(\xi_{\frf}(j))&\frp\mid\frf\\
(1-\bar{\chi}(\frp)\cN\frp^{-j})e_\chi(\rho_\infty(\xi_{\frf}(j))&\frp\nmid\frf.
\end{cases}
$$
\end{theorem}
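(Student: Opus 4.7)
My plan is to construct $\xi_\frf(j)$ as the Kronecker projection of the Eisenstein symbol evaluated at the torsion point ${_\frf}\beta$, and then to read off the regulator formula and norm compatibility from two separate inputs: Deninger's computation of the Beilinson regulator of the Eisenstein symbol, and the distribution relation for Bernoulli polynomials that already appeared in Lemma~\ref{eisenstein}.

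Concretely, I would set
$$\xi_\frf(j) := \cK_\cM\bigl(\cE_\cM^{-2j}({_\frf}\beta)\bigr) \in H^1_\cM(\Spec(K(\frm)),\,1-j),$$
using the variant Eisenstein symbol from Lemma~\ref{eisenstein}. The key point of working with the variant symbol is that ${_\frf}\beta$ is a single (non-degree-zero) torsion point, which makes the $\ell$-adic comparison in Section~\ref{ladic} go through; by the second conclusion of Lemma~\ref{eisenstein}, this element agrees on the nose with Deninger's element built from the degree-zero divisor ${_\frf}\beta^\prime$, so the Beilinson regulator computation is unchanged.

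Next, I would compute $e_\chi\rho_\infty(\xi_\frf(j))$. Deninger's Theorem~3.1 of \cite{D2} expresses the Beilinson regulator of $\cE_\cM^{-2j}$ on torsion points as an Eisenstein--Kronecker series, and the Kronecker projection $\cK_\cM$ together with the Galois action on the twisted curves ${}^gE$ assembles the contributions into a sum indexed by $g\in G_\frm$. Projecting onto the $\chi$-isotypic component therefore produces, up to an explicit constant, the sum
$$\sum_{g\in G_\frm}\chi(g)\,A(\Gamma_g)^{1-j}\sum_{0\neq\gamma\in\Gamma_g}\frac{({_\frf}\beta_g,\gamma)_g}{|\gamma|^{2-2j}}.$$
Inserting Proposition~\ref{denformula} with $\chi$ replaced by $\bar\chi$ (the switch coming from pairing Betti cohomology with its linear dual via $\eta_\bbQ$) and collecting the factors of $2\pi i$, $\sqrt{d_K}$, $(-j)!$, and $(-2j)!$ that arise from the horospherical map and the Kronecker projection should give the displayed constant $(2\cN\frf)^{-(1+j)}\Phi(\frm)/\bigl((-1)^{1+j}(-2j)!\,\Phi(\frf)\bigr)$.

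For the norm compatibility, I would argue separately in the two cases. When $\frp\mid\frf$, the relation ${_\frf}\beta_g$ vs ${_{\frp\frf}}\beta_g$ is obtained by a change of idele $f_\frf \leftrightarrow f_{\frp\frf}$ that does not shift the level, so the trace from $K(\frp\frf)$ to $K(\frf)$ of $\xi_{\frp\frf}(j)$ reproduces $\xi_\frf(j)$ after adjusting by $w_\frf/w_{\frp\frf}$. When $\frp\nmid\frf$, the trace sums ${_{\frp\frf}}\beta$ over the fiber of $E[\frp\frf]\to E[\frf]$, which is a translate of $E[\frp]$; applying the distribution relation for Bernoulli polynomials (precisely the relation used in the proof of Lemma~\ref{eisenstein}) to the horospherical map produces exactly the Euler factor $1-\bar\chi(\frp)\cN\frp^{-j}$ in the $\chi$-component.

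The main obstacle I expect is the constant in the first formula. The various normalization choices made in the setup --- the period $\Omega$ and its relation to the Néron differential $\omega$, the approximation $f_\frf$ of the idele $\rho_\frf$, the Pontrjagin pairing convention, the particular Bernoulli normalization in $\varrho^k$, and the identification $\eta_\bbQ = e_\chi\cdot\tau_0$ of the dual basis --- all contribute powers of $2$, of $2\pi i$, and of $\cN\frf$, and they have to be tracked with some care. Apart from this bookkeeping and the $\chi\leftrightarrow\bar\chi$ duality, the theorem is a direct consequence of Lemma~\ref{eisenstein}, Proposition~\ref{denformula}, and the base-change compatibility of the Beilinson regulator with the Eisenstein symbol.
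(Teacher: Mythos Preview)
Your overall strategy---define $\xi_\frf(j)$ via the Kronecker projection of the Eisenstein symbol and compare with Deninger's regulator formula---is the same as the paper's, but two points in your sketch are not right as stated.

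First, your definition omits a twist. The paper sets
\[
\xi_\frf(j)=\cK_\cM\,\cE^{-2j}\bigl(\Art(\rho_\frf)^{-1}\cdot{_\frf}\beta\bigr),
\]
not $\cK_\cM\,\cE^{-2j}({_\frf}\beta)$. The reason is that $\Art(\rho_\frf)^{-1}$ carries the pair $(E,{_\frf}\beta)$ to the canonical pair $(\bbC/\Omega\frf,\,1\bmod\Omega\frf)$, so the resulting element is independent of the auxiliary choice of $f_\frf$. In the regulator computation this twist produces precisely the factor $\bar\chi(\rho_\frf)$ needed to match Proposition~\ref{denformula} (applied with $\bar\chi$); without it your formula acquires an uncontrolled $\chi(\rho_\frf)$, which is not a harmless constant but depends on $\chi$ and on the choice of id\`ele. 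You also need to fall back to Deninger's degree-zero divisor ${_\frf}\beta'$ when $\cN\frf\le 2$, since the level-$N$ structure used in Lemma~\ref{eisenstein} requires $N\ge 3$.

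Second, your norm-compatibility argument does not work as written. The Bernoulli distribution relation in Lemma~\ref{eisenstein} compares $\varrho^{-2j}$ on the full group $E[a]$ with its value at $0$; it does not by itself produce an Euler factor, nor does it address a trace over $\Gal(K(\frp\frf)/K(\frf))$. The paper instead works directly with the regulator values via the Eisenstein--Kronecker series
\[
\cM_j(x)=\sum_{0\ne\gamma\in\Gamma_g}\frac{(x,\gamma)_g}{|\gamma|^{2-2j}},
\]
using the distribution relation for $\cM_j$ from \cite[Prop.~2.6]{D2}. The Galois trace is identified, via the Frobenius isogeny $E\to{}^{\Frob_\frp}E$, with a sum over the \emph{primitive} $\frp$th roots of ${_\frf}\beta_g$; when $\frp\mid\frf$ this is the whole fibre and the distribution relation gives $\cN\frp^{2j}\cM_j({_\frf}\beta_g)$, while when $\frp\nmid\frf$ one must add and subtract the unique non-primitive point in the fibre, and it is this correction---not the distribution relation alone---that yields the Euler factor $(1-\Frob_\frp^{-1})$. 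Your claim that ``the trace sums ${_{\frp\frf}}\beta$ over the fiber'' elides exactly this primitive/non-primitive distinction.
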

\begin{remark}The trace map in the above theorem should be understood as corestriction.  Moreover, this compatibility should already hold for the elements $\xi_{\frf}(j)$, as under the $\ell$-adic regulator these are the pullback of the polylogarithm sheaf.  In fact, Scholl proves some compatibility for the Eisenstein symbol on the universal elliptic curve in \cite[A.2]{Sch}, but the above theorem is sufficient for our purposes.
\end{remark}
\begin{proof}
Recall that we have fixed a choice of an embedding $\tau_0:K\hookrightarrow\bbC$ and of a uniformization $E\simeq\bbC/\Omega\cO_K$.  The  torsion point ${_\frf}\beta$ is dependent on the choice of id\`ele $\rho_\frf$.  By the main theorem of complex multiplication, the Artin symbol
$$
\Art(\rho_\frf)^{-1}:E\rightarrow{^{\Art(\rho_\frf)}}E
$$
maps the pair $(E(\bbC),{_\frf}\beta)$ to $(\bbC/\Omega\frf,1\bmod\Omega\frf)$ since ${_\frf}\beta=\Omega f_\frf^{-1}$ and the ideal
$$
f_\frf^{-1}(\rho_\frf) \equiv1\bmod\frf.
$$
Indeed, the restrictions on the valuation of $f_\frf$ at each prime $\frp\mid\frf$ in (\ref{approximation}), give that 
$$
\rho_{\frf,\frp}/f_\frf\in1+\frm_\frp^{\mathrm{ord}_\frp\frf}
$$
where $\frm_\frp$ is the maximal ideal in the local ring $\cO_{K_\frp}$.
Moreover, one may choose the id\`eles $\rho$ to be multiplicative in the sense that $\rho_{\frf\frp}=\rho_\frf\rho_\frp$.
\par
Thus we define our motivic elements
$$
\xi_\frf(j):=\begin{cases}\mathcal{K}_\cM\cE^{-2j}(\Art(\rho_\frf)^{-1}{_\frf}\beta)& \cN\frf\geq3\\\mathcal{K}_\cM\cE^{-2j}(\Art(\rho_\frf)^{-1}{_\frf}\beta')& \cN\frf\leq2\end{cases}
$$
where ${_\frf}\beta'$ is the degree zero divisor used by Deninger.  Notice that the computation of the $\ell$-adic regulator is not possible for these divisors, but the norm compatibility allows us to bypass this difficulty.
\par
For an $\frf$-torsion point of $E$, we define the function
$$
\mathcal{M}_j(x)=\sum_{0\neq\gamma\in\Gamma_g}\frac{(x,\gamma)_g}{|\gamma|^{2(1-j)}}.
$$
 In \cite[3.2]{D2}, Deninger computes that for an embedding $\tau$ of $F$ into $\bbC$,
$$
\rho_\infty(\mathcal{K}_\cM \cE_\cM^{-2j}({_\frf}\beta^\prime))_\tau=-\frac{(\tilde{N}^2\cN\frf)^{-2j}A(\Gamma_\tau)^{1-j}(-j)!^2}{2(-2j)!}(2\sqrt{d_K})^{-j} \mathcal{M}_j({_\frf}\beta^\prime_\tau).
$$
Deninger shows that we obtain a similar result when considering the original torsion point ${_\frf}\beta$.  Indeed, by loc. cit. (2.6),
$$
\tilde{N}^{-4j}\mathcal{M}_j({_\frf}\beta_g^\prime)=\mathcal{M}_j({_\frf}\beta_g).
$$
For our purposes we must distinguish between the group $G_\frm$ and the principal homogeneous space of embeddings $\Hom_K(K(\frm),\bbC)$, so applying lemma \ref{eisenstein} we compute
\begin{align*}
&\rho_\infty(\xi_\frf(j))\\
&=\sum_{\tau\in\cT}(2\pi i)^j\left(-\frac{\cN\frf^{-2j}A(\Gamma_\tau)^{1-j}(-j)!^2}{2(-2j)!(2\sqrt{d_K})^{j}} \mathcal{M}_j(\Art(\rho_\frf)^{-1}{_\frf}\beta_\tau)\right)\cdot\tau\\
&=\sum_{g\in G_\frm}(2\pi i)^j\left(-\frac{\cN\frf^{-2j-1}A(\Gamma_g)^{1-j}(-j)!^2}{2(-2j)!(2\sqrt{d_K})^{j}} \mathcal{M}_j({_\frf}\beta_{\Art(\rho_\frf)^{-1}g})\right)\cdot g\tau_0\nonumber\\
&=\sum_{g\in G_\frm}(2\pi i)^j\left(-\frac{\cN\frf^{-2j-1}A(\Gamma_g)^{1-j}(-j)!^2}{2(-2j)!(2\sqrt{d_K})^{j}} \mathcal{M}_j({_\frf}\beta_g)\right)\cdot \Art(\rho_\frf)g\tau_0\nonumber\\
&=\sum_{g\in G_\frm}g^{-1}\Art(\rho_\frf)^{-1}(2\pi i)^j\left(-\frac{\cN\frf^{-2j-1}A(\Gamma_g)^{1-j}(-j)!^2}{2(-2j)!(2\sqrt{d_K})^{j}} \mathcal{M}_j({_\frf}\beta_g)\right)\cdot \tau_0\nonumber.
\end{align*}
The analysis over $\bbQ[G_\frm]$ is done character by character, so one projects to the $\chi$-isotypical component
\begin{multline}\label{rhoanal}
e_\chi(\rho_\infty(\xi_\frf(j))=\\ \left(\sum_{g\in G_\frm}-\frac{\cN\frf^{-2j-1}A(\Gamma_g)^{1-j}(-j)!^2}{2(-2j)!(2\sqrt{d_K})^{j}} \mathcal{M}_j({_\frf}\beta_g)\bar{\chi}(g)\bar\chi(\rho_\frf)\right)\cdot \eta_\bbQ
\end{multline}
where $\eta_\bbQ=e_\chi\cdot(2\pi i)^j\tau_0$ is the basis of $e_\chi\left(M_B^{+*}\right)$ determined by the choice of embedding. Comparing equation \eqref{rhoanal} with the formula in proposition \ref{denformula} we have the first part of the theorem.  A careful reader will note that there is a difference of a factor of $\cN\frf$ between our formula and Deninger's formula.  This is due to the fact that we scale $\rho_\infty$ by that factor in have agreement with \cite{HK1}.
\par
To deduce the norm compatibility, we first note that corestriction commutes with the regulator map, so it suffices to study the elements $w_\frf/w_{\frp\frf}\Tr_{K(\frp\frf)/K(\frf)}\rho_\infty(\xi_{\frf\frp}(j))$.  By the computation in \eqref{rhoanal} we have that
\begin{align*}
&\Tr_{K(\frp\frf)/K(\frf)}\rho_\infty(\xi_{\frf\frp}(j))\\
&=\begin{multlined}[t]\Tr_{K(\frp\frf)/K(\frf)}\sum_{g\in G_\frm}-g^{-1}\Art(\rho_{\frf\frp})^{-1}(2\pi i)^j\\\frac{\cN\frf\frp^{-2j}A(\Gamma_g)^{1-j}(-j)!^2}{2(-2j)!(2\sqrt{d_K})^{j}} \mathcal{M}_j({_{\frf\frp}}\beta_g)\cdot \tau_0\end{multlined}\\
&=\begin{multlined}[t]\sum_{g\in G_\frm}-g^{-1}\Art(\rho_{\frf\frp})^{-1}(2\pi i)^j\\\frac{\cN\frf\frp^{-2j}A(\Gamma_g)^{1-j}(-j)!^2}{2(-2j)!(2\sqrt{d_K})^{j}} \Tr_{K(\frp\frf)/K(\frf)}\mathcal{M}_j({_{\frf\frp}}\beta_g)\cdot \tau_0.\end{multlined}
\end{align*}
Focusing on $\cM_j$, we proceed by taking first the case of $\frp\mid\frf$.
\begin{align}
\Tr_{K(\frp\frf)/K(\frf)}\Art(\rho_\frp)^{-1}\mathcal{M}_j({_{\frf\frp}}\beta_g)=&\Tr_{K(\frp\frf)/K(\frf)}\Art(\rho_\frp)^{-1}\mathcal{M}_j({_{\frf\frp}}\beta_g)\nonumber\\
=&\Tr_{K(\frp\frf)/K(\frf)}\mathcal{M}_j({^{\Frob_\frp}}{_{\frf\frp}}\beta_{g})\nonumber\\
=& w_{\frp\frf}/w_\frf\sum_{u\in\Frob_\frp^{-1,*}{_\frf}\beta_g}\mathcal{M}_j({_\frf}\beta_g+u)\nonumber\\
=& w_{\frp\frf}/w_\frf\cN\frp^{2j}\mathcal{M}_j({_\frf}\beta_g)\label{divalpha}
\end{align}
Here the $u$ are the primitive $\frp$th roots of ${_\frp}\beta_g$ resulting from pulling back by the isogeny $E\stackrel{\Frob_\frp^{-1}}{\rightarrow}{^{\Frob_\frp}}E$, and the equality in \eqref{divalpha} follows from a formula in the proof of \cite[prop. 2.6]{D2}.  Now in the case that $\frp\nmid\frf$, there is a unique point $u_0$ which is not a primitive root and hence in $\Frob_\frp^{-1*}{_\frf}\beta_g$ but not a conjugate of any primitive root.  We add and subtract this point from the trace to conclude that
$$
\Art(\rho_{\frp})^{-1}\Tr_{K(\frp\frf)/K(\frf)}\mathcal{M}_j({_{\frf\frp}}\beta_g)=(1-\Frob_\frp^{-1})w_{\frp\frf}/w_\frf\cN\frp^{2j}\mathcal{M}_j({_\frf}\beta_g).
$$
Thus, we have shown that for a character $\chi$ of conductor $\frf$ 
$$
e_\chi(\rho_\infty(w_\frf/w_{\frp\frf}\Tr_{K(\frp\frf)/K(\frf)}\xi_{\frf\frp}(j))=
\begin{cases}
R_{\frf,j}L^\prime(\bar{\chi},j)\eta_\bbQ&\frp\mid\frf\\
&\\
(1-\bar{\chi}(\frp)\cN\frp^{-j})R_{\frf,j}L^\prime(\bar{\chi},j)\eta_\bbQ&\frp\nmid\frf,
\end{cases}
$$
where 
$$R_{\frf,j}=\frac{(-2\cN\frf)^{-1-j}\Phi(\frm)}{(-2j)!\Phi(\frf)}.$$
\end{proof}
\subsection{$\ell$-adic regulator}\label{ladic} We now study the image of the elements $\xi_\frf(j)$ under the \'etale Chern class map, which can be considered an $\ell$-adic regulator.  We begin this section with a brief review of the Euler system of elliptic units.  We use Kato's description of these elements, and refer to \cite{F3} for a comparison with more classical constructions.  
%--------------------lemma---------------katotheta--------------------------
\begin{lemma}(\cite[15.4.4]{Ka} Let $E$ be an elliptic curve over a field $F$ with complex multiplication $\cO_K\cong\End_F(E)$ and let $\fra$ be an ideal of $\cO_K$ prime to $6$.
Then there is a unique function $${_\fra}\Theta_E\in \Gamma(E\setminus{_\fra}E,\cO^\times)$$ satisfying
\begin{itemize}
\item[(i)] $\text{div}({_\fra}\Theta_E)=N\fra\cdot(0)-E_\fra$, where $E_\fra$ denotes the $\fra$-torsion points of $E$.
\item[(ii)] For any $b\in\bbZ$ prime to $\fra$ we have $N_b({_\fra}\Theta_E)={_\fra}\Theta_E$ where $N_b$ is the norm map
associated to the finite flat morphism $E\setminus E_{b\fra}\to E\setminus E_\fra$ given by multiplication with $b$.
\end{itemize}
Moreover, for any isogeny $\phi:E\to E'$ of CM elliptic curves where $\End_F(E')=\cO_K$, we have
$\phi_*({_\fra}\Theta_E)={_\fra}\Theta_{E'}$, in particular property (ii) also holds with $b\in\cO_K$ prime to $\fra$.
\label{katotheta}\end{lemma}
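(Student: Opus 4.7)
The plan is to separate three ingredients: existence of a function with the prescribed divisor, uniqueness via the norm condition, and an explicit construction via Siegel units. The key idea is that the norm relation pins down the otherwise ambiguous constant in the divisor construction, and the condition $\fra$ coprime to $6$ is precisely what makes both the uniqueness and the construction work out.

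\emph{Step 1 (Existence with divisor $D = N\fra \cdot (0) - E_\fra$).} The divisor $D$ has degree zero. I would verify that $\sum_{P \in E_\fra} P = 0$ in the group law on $E$: since $\fra$ is coprime to $2$, $N\fra$ is odd, so $E_\fra$ contains no nontrivial $2$-torsion, and the involution $P \mapsto -P$ pairs the nonzero points of $E_\fra$. By Abel--Jacobi on $E$, $D$ is then principal, and there exists $\theta_0 \in \Gamma(E\setminus E_\fra, \cO^\times)$ with $\operatorname{div}(\theta_0) = D$, unique up to multiplication by a global unit, i.e.\ by $F^\times$.

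\emph{Step 2 (Uniqueness from (i) and (ii)).} Suppose $\theta, \theta'$ both satisfy (i) and (ii). Their ratio $c := \theta / \theta'$ has trivial divisor on the proper connected curve $E$, so $c \in F^\times$. For any $b \in \bbZ$ coprime to $\fra$, the map $[b] : E\setminus E_{b\fra} \to E\setminus E_\fra$ has degree $b^2$, so $N_b$ applied to the constant $c$ gives $c^{b^2}$. Multiplicativity of $N_b$ together with (ii) then forces $c^{b^2} = c$, i.e.\ $c^{b^2 - 1} = 1$. Since $\fra$ is coprime to $6$, we may take $b = 2$ and $b = 3$, obtaining $c^3 = c^8 = 1$ and hence $c = 1$.

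\emph{Step 3 (Existence of a function satisfying (i) and (ii), descent, and functoriality).} I would construct a candidate via classical Siegel units. Over $\bbC$, writing $E = \bbC/\Lambda$, a standard ratio of Weierstrass $\sigma$-functions attached to $\Lambda$ and $\fra^{-1}\Lambda$ yields a function with divisor $12 D$ whose $12$th-power version of (ii) follows from the classical $\sigma$-distribution relation. Because $\fra$ is coprime to $6$, one extracts a canonical $12$th root with divisor $D$ that still satisfies (ii); the uniqueness of Step 2 forces the resulting function to be Galois-equivariant and hence defined over $F$. For the functoriality under an isogeny $\phi: E \to E'$ with $\End_F(E') = \cO_K$, the pushforward $\phi_*({_\fra}\Theta_E)$ has divisor $N\fra\cdot(0_{E'}) - \phi_*(E_\fra) = N\fra\cdot(0_{E'}) - E'_\fra$, using $\cO_K$-equivariance of $\phi$ on $\fra$-torsion, and it inherits (ii) from compatibility of $N_b$ with the pushforward. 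Uniqueness then gives $\phi_*({_\fra}\Theta_E) = {_\fra}\Theta_{E'}$. Applying this with $\phi = [b]$ for $b \in \cO_K$ coprime to $\fra$ extends (ii) to $\cO_K$-multipliers.

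\emph{Main obstacle.} The principal difficulty is the canonical $12$th-root extraction in Step 3: the classical Siegel-unit construction only produces $({_\fra}\Theta_E)^{12}$, and extracting a globally defined $12$th root so that (ii) becomes an equality of functions rather than of $12$th powers is precisely what requires $\fra$ coprime to $6$. Carrying this out algebraically over an arbitrary base field $F$ (so that the construction is not merely analytic) is the heart of Kato's treatment in \cite[\S15]{Ka}.
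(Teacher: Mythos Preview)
The paper does not prove this lemma: it is stated with the citation \cite[15.4.4]{Ka} and used as input, with no argument supplied. So there is no ``paper's own proof'' to compare against.

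Your sketch is essentially the standard argument behind Kato's result, and the logical structure is sound. Step~2 is the crux and is correct as written: the ratio of two candidates is a constant $c\in F^\times$, $N_b(c)=c^{b^2}$, and taking $b=2,3$ (permitted since $(\fra,6)=1$) forces $c^3=c^8=1$, hence $c=1$. Step~1 is fine once you observe that $E_\fra$ is an $F$-rational divisor (because $\cO_K\cong\End_F(E)$), so the Abel--Jacobi criterion applies over $F$ and not merely over $\bar F$. In Step~3, your functoriality argument is clean and is exactly how one bootstraps (ii) from $b\in\bbZ$ to $b\in\cO_K$.

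The one place where your outline is genuinely incomplete is, as you yourself flag, the construction in Step~3. The classical Siegel/sigma-function approach is transcendental; to get a function on $E$ over an arbitrary field $F$ (including positive characteristic) one needs an algebraic construction. Kato handles this by first producing ${_\fra}\Theta_E^{12}$ algebraically (via a determinant of the $\fra$-isogeny acting on $\omega_E^{\otimes 12}$, i.e.\ essentially $\Delta$), and then using the uniqueness argument itself to show a $12$th root exists over $F$: one checks that for any two $12$th roots over $\bar F$ satisfying (i), their ratio is a $12$th root of unity killed by both $b^2-1$ for $b=2$ and $b=3$, hence equal to $1$, and Galois-invariance follows. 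So the role of ``$(\fra,6)=1$'' is not in extracting a $12$th root per se, but in making that root canonical via your Step~2 mechanism. If you want to turn your sketch into a self-contained proof, this is the point that needs to be made precise.
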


Given $\frf\neq 1$ and any (auxiliary) $\fra$ which is prime to $6\frf$ we define an analog of the cyclotomic unit $1-\zeta_f$ by
$$
{_\fra}z_\frf={_\fra}\Theta_{\bbC/\frf}(1)
$$
and for $\frf=1$ we define a family of elements indexed by all ideals $\fra$ of $K$ by
$$ 
u(\fra)=\frac{\Delta(\cO_K)}{\Delta(\fra^{-1})}.
$$
where $\Delta(\tau)=q_\tau\prod(1-q_\tau^n)^{24}$ for $q_\tau=e^{2\pi i\tau}$ is the Ramanujan $\Delta$-function.
% 
%--------------lemma-----------------------ellipticunitproperties------------------
\begin{lemma} \cite[Lemma 2.2]{F3} The complex numbers ${_\fra}z_\frf$ and $u(\fra)$ satisfy the following properties
\begin{itemize}
\item[a)] (Integrality)
\begin{align*}&{_\fra}z_\frf\in\begin{cases}\cO_{K(\frf)}^\times & \text{$\frf$ divisible by primes $\frp\neq\frq$}\\
\cO_{K(\frf),\{v\mid\frf\}}^\times & \text{$\frf=\frp^n$ for some prime $\frp$}\end{cases}\\
&u(\fra)\cdot\cO_{K(1)}=\fra^{-12}\cO_{K(1)}
\end{align*}
\item[b)] (Galois action) For $(\frc,\frf\fra)=1$ with Artin symbol $\Art(\frc)\in\Gal(K(\frf)/K)$ we have
$$
{_\fra}z_\frf^{\Art(\frc)}={_\fra}z_{\frc^{-1}\frf};\quad u(\fra)^{\Art(\frc)}=u(\fra\frc)/u(\frc).
$$
This implies as in \ref{katotheta}
$$
{_\fra}z_\frf^{N\frc-\Art(\frc)}={_\frc}z_\frf^{N\fra-\Art(\fra)};\quad u(\fra)^{1-\Art(\frc)}=u(\frc)^{1-\Art(\fra)}.
$$
\item[c)] (Norm compatibility) For a prime ideal $\frp$ one has
$$N_{K(\frp\frf)/K(\frf)}({_\fra}z_{\frp\frf})^{w_\frf/w_{\frp\frf}}=
\begin{cases} 
{_\fra}z_\frf  & \frp\mid\frf\neq 1 \\
{_\fra}z_\frf^{1-\Frob_\frp^{-1}} & \frp\nmid\frf\neq 1\\
u(\frp)^{(\Art(\fra)-N\fra)/12} & \frf=1 
\end{cases}
$$
\item[e)] (Kronecker limit formula). Let $\eta$ be a complex character of $G_\frf$. If $\frf=1$ and $\eta\neq 1$ choose
any ideal $\fra$ so that $\eta(\fra)\neq 1$. Then
\begin{align*} L(\eta,0)=&\,\,\zeta_K(0)=-\frac{h}{w_1}R  &\eta=1\\
\frac{d}{ds}L(s,\eta)\vert_{s=0}=&-\frac{1}{1-\eta(\fra)} \frac{1}{12w_1}\sum_{\sigma\in G_1}\log |\sigma(u(\fra))|\eta(\sigma)
\quad\quad\eta\neq 1, &\frf=1\\
\frac{d}{ds}L(s,\eta)\mid_{s=0}=&-\frac{1}{N\fra-\eta(\fra)} \frac{1}{w_\frf}\sum_{\sigma\in G_\frf}\log |\sigma({_\fra}z_\frf)|\eta(\sigma)
&\frf\neq 1. \end{align*}
\end{itemize}
\label{ellipticunitproperties}
\end{lemma}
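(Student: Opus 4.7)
The plan is to derive each of the four assertions directly from Kato's characterization of the theta function ${_\fra}\Theta_E$ in Lemma \ref{katotheta}, combined with the main theorem of complex multiplication for the Galois action and the analytic theory of Eisenstein series for the Kronecker limit formula. Throughout I would work with the canonical CM pair $(\bbC/\frf, 1\bmod\frf)$ so that every evaluation $({_\fra}z_\frf)^\sigma$ becomes an evaluation of ${_\fra}\Theta$ on an isogenous curve; the uniqueness statement in Lemma \ref{katotheta} will then make such manipulations unambiguous.

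For integrality (a), note that the divisor $N\fra\cdot(0)-E_\fra$ is supported on $\fra$-torsion, and $(\fra,\frf)=1$ forces $1\bmod\frf\notin E_\fra$, so ${_\fra}z_\frf$ is a nonzero element of $K(\frf)^\times$. A N\'eron-model argument at each prime of $K(\frf)$, using that ${_\fra}\Theta$ is a regular unit off the zero section and the $\fra$-torsion section, then gives unit behaviour away from primes dividing $\frf$; the case split is forced because for $\frf = \frp^n$ the product formula leaves nontrivial valuation precisely at the places above $\frp$, whereas if $\frf$ has two distinct prime divisors one can balance valuations and obtain a global unit. For $u(\fra)$, the transformation law $\Delta(\lambda\Gamma)=\lambda^{-12}\Delta(\Gamma)$ combined with the integrality of $j(\cO_K)$ and the relation $[\cO_K:\fra^{-1}]=N\fra$ yields the ideal $\fra^{-12}\cO_{K(1)}$. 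The Galois action (b) is then a direct application of the main theorem of complex multiplication: $\Art(\frc)$ sends $(\bbC/\frf, 1\bmod\frf)$ to $(\bbC/\frc^{-1}\frf, 1\bmod\frc^{-1}\frf)$, so the isogeny-functoriality of ${_\fra}\Theta$ in Lemma \ref{katotheta} immediately gives $({_\fra}z_\frf)^{\Art(\frc)} = {_\fra}z_{\frc^{-1}\frf}$. The symmetric distribution identity ${_\fra}z_\frf^{N\frc-\Art(\frc)}={_\frc}z_\frf^{N\fra-\Art(\fra)}$ follows by swapping the roles of $\fra$ and $\frc$ in property (ii), and the analogous formulas for $u(\fra)$ are a modular transformation computation on the $j$-line.

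The norm compatibility (c) is the technical core. I would apply property (ii) of Lemma \ref{katotheta} to multiplication by a local generator $b$ of $\frp$: the identity $N_b({_\fra}\Theta_E) = {_\fra}\Theta_E$, read on the isogeny $\bbC/\frp\frf \to \bbC/\frf$, factors into a product over the geometric fibre above $1\bmod\frf$. When $\frp\mid\frf$ every point in that fibre is primitive of level $\frp\frf$, so the product coincides with the field-theoretic norm from $K(\frp\frf)$ to $K(\frf)$ up to the normalisation factor $w_\frf/w_{\frp\frf}$. When $\frp\nmid\frf$ the fibre contains one non-primitive point, namely the preimage of $1\bmod\frf$ under the partial isogeny; isolating it and applying property (ii) once more produces the Euler factor $1-\Frob_\frp^{-1}$. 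The $\frf = 1$ case of (c) is handled by the same strategy applied to $\Delta$, combined with the second distribution identity of (b).

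For the Kronecker limit formula (e) I would identify $\log|\sigma({_\fra}z_\frf)|$ with the value at the CM point attached to $\sigma$ of the non-holomorphic Eisenstein series of weight zero for $\frf$, so that summation against $\eta$ followed by Mellin transform recovers $L(s,\eta)$ up to the displayed prefactor; the case $\eta = 1$, $\frf = 1$ reduces to the analytic class number formula $\zeta_K(0) = -h/w_1$. The main obstacle is precisely this last step: one must match the constants $1/w_\frf$, $1/(N\fra - \eta(\fra))$, and the $1/12$ in the $\frf=1$ formula exactly, and the auxiliary ideal $\fra$ has to be chosen so that the Eisenstein pole at $s=0$ is cancelled when $\eta$ is trivial. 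Parts (a)--(c) are essentially formal consequences of Lemma \ref{katotheta}, whereas real analytic input enters only in (e), where one ultimately invokes Kronecker's second limit formula and its $\Delta$-function analogue.
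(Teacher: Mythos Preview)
The paper does not prove this lemma at all: it is quoted verbatim as \cite[Lemma 2.2]{F3} and followed only by a two-line remark, so there is no in-paper argument to compare against. Your sketch is the standard one and is essentially what lies behind the cited reference; parts (a)--(c) do reduce to the uniqueness and isogeny-functoriality in Lemma~\ref{katotheta} together with the main theorem of complex multiplication, and (e) is Kronecker's second limit formula.

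One small point worth tightening: in (a) the N\'eron-model argument you invoke really uses that the CM elliptic curve over $K(\frf)$ has everywhere potentially good reduction and that ${_\fra}\Theta_E$ extends to a unit on the complement of the $\fra$-torsion in the N\'eron model; the dichotomy between $\frf$ a prime power and $\frf$ divisible by two distinct primes is cleanest to see not via ``balancing valuations'' but via the norm relation in (c) itself (trace down to $K(\frp)$ and use that $1-\Frob_\frq^{-1}$ kills the $\frq$-valuation when $\frq\neq\frp$). Likewise in (c) for $\frf=1$ you should make explicit that the product formula $\prod_{\alpha\in E[\frp]\setminus 0}{_\fra}\Theta_E(\alpha)$ is computed via the relation ${_\fra}\Theta_E^{12} = \Delta(E)^{N\fra}/\Delta(E/E[\fra])$ up to the Galois twist, which is where the exponent $(\Art(\fra)-N\fra)/12$ comes from. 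These are refinements rather than gaps; your overall plan is sound.
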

 \begin{remark}(i) The relations in b) show the auxiliary nature of $\fra$. (ii) The Galois action in b) together with the invariance under homothety shows that the Galois conjugates of ${_\fra}z_\frf$ are the numbers ${_\fra}\Theta_E(\alpha)$ where $(E,\alpha)$ runs through all pairs with $E/\bbC$ an elliptic curve and $\alpha\in E(\bbC)$ a primitive $\frf$-division point.
\end{remark}
We compute the image of $\xi_\frf(j)$ under the \'etale Chern class map $\rho_{et}$ in terms of elliptic units.
\begin{theorem}\label{ladicreg}   For all $1\neq\frf\mid\frm$, we have that
$$\rho_{et}(\xi_\frf(j))=\frac{\cN\frf^{-1-j}w_\frf}{(\cN\fra-\Art(\fra))(-2j)!\prod_{\frl\mid\ell}(1-\Frob^{-1}_\frl)}\cdot \left(\Tr_{K(\ell^n\frf)/K(\frf)}{_\fra}z_{\ell^n\frf}\zeta_{\ell^n}^{\otimes-j}\right)_n$$
up to a sign, where $\fra\nmid6\ell\frf$ is an auxilliary ideal and the ${_\fra}z_{\ell^n\frf}$ are elliptic units.
\end{theorem}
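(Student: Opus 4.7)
The plan is to factor the \'etale Chern class map through Beilinson's elliptic polylogarithm, following the strategy of Kings \cite{K2} and Huber-Kings \cite{HK2}. The crucial input is that the \'etale realization of the Eisenstein symbol $\cE_\cM^{-2j}$ at a primitive $\frf$-torsion point of an elliptic curve with CM by $\cO_K$ coincides, up to explicit normalization, with the specialization at that torsion section of the $\ell$-adic elliptic polylogarithm sheaf. Kato's explicit reciprocity (Lemma \ref{katotheta} together with the Coleman-theoretic description of ${_\fra}\Theta_E$) then identifies this specialization with a norm-compatible system of elliptic units ${_\fra}z_{\ell^n\frf}$ along the cyclotomic tower, tensored with suitable Tate twists, up to the operator $\cN\fra - \Art(\fra)$ which removes the dependence on the auxiliary ideal $\fra$.

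First I would recall from Kings' work that for a primitive $\frf$-torsion point $\alpha$ of $E$, the image $\rho_{et}(\cE_\cM^{-2j}(\alpha))$ is a compatible system $(\alpha_n^*\pol_\ell^{-2j})_n$, where $\alpha_n$ is a compatible choice of $\ell^n\frf$-torsion section lifting $\alpha$ and $\pol_\ell$ is the $\ell$-adic elliptic polylog. Kato's formula for the polylog at torsion sections expresses $\alpha_n^*\pol_\ell^{-2j}$ in terms of ${_\fra}\Theta_E$ evaluated at $\alpha_n$, modulo the operator $\cN\fra - \Art(\fra)$. In our setup, the Artin symbol $\Art(\rho_\frf)^{-1}$ sends $(E(\bbC),{_\frf}\beta)$ to the canonical CM pair $(\bbC/\Omega\frf,1\bmod\Omega\frf)$, so the relevant values of ${_\fra}\Theta_E$ are exactly the elliptic units ${_\fra}z_{\ell^n\frf}$.

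Next I would push through the Kronecker projector $\mathcal{K}_\cM$ on the \'etale side, which is the composite of pullback by $\theta_E(\sqrt{d_K})^{-j}$ with the proper pushforward $\pi_{-j,*}$. The pullback uses the CM action on $H^1_{et}(E,\bbZ_\ell(1))$ to convert the $(-2j)$-th symmetric power appearing in $\pol_\ell^{-2j}$ into the $(-j)$-th Tate twist, producing the factor $\zeta_{\ell^n}^{\otimes -j}$, while the pushforward induces a corestriction from $K(\ell^n\frf)$ down to $K(\frf)$, yielding the factor $\Tr_{K(\ell^n\frf)/K(\frf)}$. The combinatorial prefactor $\cN\frf^{-1-j}/(-2j)!$ comes from the normalization of the horospherical map $\varrho^{-2j}$ recalled in Lemma \ref{eisenstein}; the factor $w_\frf$ arises from the root-of-unity correction in the norm compatibility established in Theorem \ref{motivic}; and the Euler product $\prod_{\frl\mid\ell}(1-\Frob_\frl^{-1})$ enters on passing to the inverse limit via the norm relations of Lemma \ref{ellipticunitproperties}(c), which for primes not dividing the conductor contribute exactly this Euler factor.

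The main obstacle will be matching the normalizations precisely, since Deninger, Kings, Huber-Kings, and Kato each use slightly different conventions for the Eisenstein symbol, the polylog, and the Kronecker map. A clean strategy is to verify the identity first universally over the modular curve $Y(N)$ with $N=\cN\frf\geq 3$, where Beilinson's constructions are unambiguous and the explicit formula for the polylog of the universal elliptic curve applies directly, and then to specialize to the CM curve $E$ via a level-$N$ structure as in the proof of Lemma \ref{eisenstein}. Once the identity is pinned down for one sufficiently large $\frf$, the norm compatibilities of Theorem \ref{motivic} and Lemma \ref{ellipticunitproperties}(c) propagate it to all $\frf\mid\frm$.
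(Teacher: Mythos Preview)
Your high-level strategy matches the paper's: compute $\rho_{et}$ on the Eisenstein symbol via the $\ell$-adic elliptic polylogarithm (Huber--Kings, \cite{HK1}), invoke Kings' explicit formula \cite[Theorem 4.2.9]{K2} expressing the torsion-section pullback of the polylog in terms of Kato's theta function ${_\fra}\Theta_E$, and then apply the Kronecker projector $\mathcal{K}_\ell$. The identification of ${_\fra}\Theta_E$-values with the elliptic units ${_\fra}z_{\ell^n\frf}$ via the main theorem of CM is also exactly what the paper does.

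However, your account of where the trace, the factor $w_\frf$, and the Euler factor $\prod_{\frl\mid\ell}(1-\Frob_\frl^{-1})$ come from is wrong, and this is a genuine gap. Kings' formula does \emph{not} hand you a trace: it gives
\[
\Bigl(\sum_{[\ell^n]t_n={_\frf}\beta}{_\fra}\Theta_E(-t_n)\,(\tilde t_n)^{\otimes -2j}\Bigr)_n,
\]
a sum over \emph{all} $\ell^n$-th roots of ${_\frf}\beta$, primitive or not. The Kronecker projector $\mathcal{K}_\ell$ acts only on the $(\tilde t_n)^{\otimes -2j}$ piece, sending it to $\zeta_{\ell^n}^{\otimes -j}$; the pushforward $\pi_{-j,*}$ is from $E^{-j}$ to $\Spec(K(\frm))$ and has nothing to do with corestriction along $K(\ell^n\frf)/K(\frf)$. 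To get from Kings' sum to the trace of a single primitive root $s_n$ requires a separate combinatorial argument (Lemma~\ref{trace} in the paper): one filters the set $\{t_n:[\ell^n]t_n={_\frf}\beta\}$ by the exact order of the $\ell$-part, observes that $\Gal(K(\frl^{r-i}\frf)/K(\frf))$ acts transitively on each graded piece with multiplicity $w_\frf$, and then sums the resulting geometric series in $\Frob_\frl^{-1}$. This is precisely where both $w_\frf$ and $(1-\Frob_\frl^{-1})^{-1}$ appear; they do not come from Theorem~\ref{motivic} (which concerns the archimedean regulator) nor from the norm relations of Lemma~\ref{ellipticunitproperties}(c) applied in the limit. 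Without this lemma your argument does not close, and the proposed universal verification over $Y(N)$ would not manufacture it either, since the issue is specific to the CM splitting $E[\ell^n\frf]=E[\ell^n]\oplus E[\frf]$ and the Galois structure of the $\ell^n$-roots.
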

\begin{proof}  As $E$ is an abelian variety, the Todd classes vanish and the following diagram commutes.
$$
\xymatrix{
H^{1-2j}_\mathcal{M}(E^{-2j},1-2j)\ar[r]^{\rho_{et}}\ar[d]_{\mathcal{K}_\mathcal{M}}&H_{et}^{1-2j}(E^{-2j},\bbQ_\ell(1-2j))\ar[d]^{\mathcal{K}_\ell}\\
H^1_\mathcal{M}(\Spec(K(\frm)),1-j)\ar[r]_{\rho_{et}}&H_{et}^1(K(\frm),\bbQ_\ell(1-j))
}
$$
By \cite[Theorem 2.2.4]{HK1}, the \'etale realization of Eisenstein symbol can be computed in terms of the pullback of the elliptic polylogarithm sheaf along torsion sections. 
Indeed,
$$
\rho_{et}(\xi_\frf(j))=\mathcal{K}_\ell(\rho_{et}(\cE^{-2j}(\rho_\frf\cdot{_\frf}\beta)))=\Art(\rho_\frf)^{-1}\cdot\cN\frf^{-2j-1}\mathcal{K}_\ell({_\frf}\beta^*\cP\rm{ol}_{\bbQ_\ell})^{-2j}.  
$$
Happily, Kings computes this pullback up to a sign for an elliptic curve over any base \cite[Theorem 4.2.9]{K2} using the geometric elliptic polylog under the assumption that $\ell\nmid\frf$.  So we can now consider the action of the Kronecker map $\cK_\ell$ on 
\begin{equation}\label{kings4.2.9}
({_\frf}\beta^*\cP{\rm ol}_{\bbQ_\ell})^{-2j}=\frac{\pm\cN\frf^j}{\cN\fra([\fra]^{-2j}\cN\fra-1)(-2j)!}\left(\delta\sum_{[\ell^n]t_n={_\frf}\beta}{_\fra}\Theta_E(-t_n)(\tilde{t_n})^{\otimes -2j}\right)_n 
\end{equation}
which is an element of $H_{et}^{1-2j}(E^{-2j},\bbQ_\ell(1-2j))$.  Here, $\delta$ is the connecting homomorphism in a Kummer sequence, $\fra\subset\cO_K$ is chosen prime to $\ell\frf$, $[\fra]$ is the corresponding isogeny, and $\tilde{t}_n$ is a projection of $t_n\in E[\frf\ell^n]=E[\frf]\oplus E[\ell^n]$ to $E[\ell^n]$.  Kings gives the projection map as a composition
$$
E[\frf\ell^n]\stackrel{\Art(\rho_\frf)^{-1}}{\longrightarrow}{^{\Art(\rho_\frf)}}E[\ell^n]\stackrel{\Art(\rho_\frf)}{\longrightarrow}E[\ell^n],
$$ 
which accounts for the multiplication of his result by $\cN\frf^j$ above.  
\par
For a point $t\in E[\ell^n]$, we define $\gamma(t)^k:=<t,\sqrt{d_K}t>^{\otimes k}$ where $<,>$ is the Weil pairing.  Following section 5.1.1 of \cite{K2} we have that
$$
\mathcal{K}_\ell(\tilde{t}_n^{\otimes -2j})=\gamma(\tilde{t}_n)^{-j}=\zeta_{\ell^n}^{\otimes-j} \quad\text{and}\quad\mathcal{K}_\ell([\fra]^{-2j})=\cN\fra^{-j},
$$
where $\zeta_\ell^n=e^{\frac{2\pi i}{\ell^n}}$ is the canonical $\ell^n$th root of unity.
Note also that the Artin automorphism $\Art(\fra)$ acts on the space $H^1(K(\frm),\bbQ_\ell)$ via multiplication by $\cN\fra$, and thus on the space $H^1(K(\frm),\bbQ_\ell(1-j))$ by $\cN\fra^{2-j}$.  
We conclude that
$$
\rho_{et}(\xi_\frf(j))=\frac{\Art(\rho_\frf)^{-1}}{\cN\fra-\sigma(\fra)}\cdot\cN\frf^{-1-j}\left(\delta\sum_{[\ell^n]t_n={_\frf}\beta}{_\fra}\Theta_E(-t_n)\zeta_{\ell^n}^{\otimes-j}\right)_n.
$$
Now, it follows from the proof of lemma \ref{cohomologycomp} below that the Kummer map $\delta$ gives an isomorphism $\cO_{K(\frm)}[\frac{1}{\frm\ell}]^\times\otimes\bbQ_\ell(-j)\simeq H^1(K(\frm),\bbQ_\ell(1-j))$.  In the sequel, we will drop the map $\delta$ from our formulas and consider the equalities as occurring inside of the unit group.
As we consider imaginary quadratic fields with any class number, we need the following analog of \cite[Lemma 5.1.2]{K2}.  
% %-------------lemma------------trace-----------
\begin{lemma}\label{trace}
For any rational prime $\ell$,
\begin{multline*}
\prod_{\frl\mid\ell}(1-\Frob^{-1}_\frl)^{-1}\left(\sum_{[\ell^n]t_n= \Omega f_\frf^{-1}}{_\fra}\Theta_E(-t_n)\otimes\zeta_{\ell^n}^{\otimes-j}\right)_n=\\w_\frf\left(\Tr_{K(\ell^n\frf)/K(\frf)}{_\fra}\Theta_E(-s_n)\otimes\zeta_{\ell^n}^{\otimes-j}\right)_n,
\end{multline*}
where $s_n$ is a primitive $\ell^n$th root of ${_\frf}\beta$.
\end{lemma}
\begin{proof}
Let $\frl$ be a prime of $K$ and $\nu=\mathrm{ord}_\frl(\frf)$.  Define $t_r$ via the main theorem of CM so that $\Frob_\frl^{-r}t_r=\Omega f_\frf^{-1}$ and write $t_r=(\tilde{t}_r,t_{r,0})\in E[\frl^{r+\nu}]\oplus E[\frf_0]=E[\frl^r\frf_0]$.  We also abuse notation and write $\frl^it_r$ for $\Frob_\frl^{-i}t_r$.  Define a filtration $F^\bullet$ on the set $H^\frl_r=\{\frl^rt_r=\Omega f_\frf^{-1}\}$ by
$$
F^i_r:=\{t_r=(\tilde{t}_r,t_{r,0})\in H^\frl_{r,t}:\frl^{r+\nu-i}\tilde{t}_r=0\}.
$$
The Frobenius at $\frl$ acts via $(\Frob^{-1}_\frl)\zeta_{\ell^r}^{\otimes k}=\zeta_{\ell^{r-1}}^{\otimes k}$ and fixes $\Tr_{K(\frl^r\frf)/K(\frl^{r-i}\frf)}{_\fra}\Theta_E(-s_r)$ as the Galois group is generated by the frobenius.  Thus, we compute
\begin{align*}
\Frob_\frl^{-i}\Tr_{K(\frl^r\frf)/K(\frl^{r-i}\frf)}{_\fra}&\Theta_E(-s_r)\otimes\zeta_{\ell^r}^{\otimes-j}\\&=\Tr_{K(\frl^r\frf)/K(\frl^{r-i}\frf)}{_\fra}\Theta_E(-(\tilde{s}_r,s_{r,0}))\otimes\zeta_{\ell^{r-i}}^{\otimes-j}\\
&={_\fra}\Theta_E(-(\tilde{s}_{r-i},s_{r-i,0}))\otimes\zeta_{\ell^{r-i}}^{\otimes-j}.
\end{align*}
The second equality follows from the distribution relation for elliptic units in lemma \ref{ellipticunitproperties}.  Notice that the elliptic function ${_\fra}\Theta_E$ does not change in the distribution relation even though the curve does because the lattices are homothetic.
\par
The Galois group $\Gal(K(\frl^{r-i}\frf)/K(\frf))$ acts transitively on $F^i_r\setminus F^{i+1}_r$ with each conjugate appearing $w_\frf$ times.  Hence we can write
$$
\Frob_\frl^{-i}\Tr_{K(\frl^r\frf)/K(\frf)}{_\fra}\Theta_E(-s_r)\otimes\zeta_{\ell^r}^{\otimes-j}=
\frac{1}{w_\frf}\sum_{t_{r-i}\in F^i_r\setminus F^{i+1}_r}{_\fra}\Theta_E(-(\tilde{t}_{r-i},t_{r-i,0}))\otimes\zeta_{\ell^{r-i}}^{\otimes-j}
$$
These elements are annihilated by $\frl^r$, so summing over $i$ we can take the limit as $r\rightarrow\infty$ to get
\begin{align*}
\big(\sum_{\frl^rt_r={_\frf}\beta}{_\fra}&\Theta_E(-t_r)\otimes\zeta_{\ell^r}^{\otimes-j}\big)_r\\&=w_\frf\left(\sum_{i=1}^r(\Frob_\frl^{-1})^i\Tr_{K(\frl^r\frf)/K(\frf)}{_\fra}\Theta_E(-s_r)\otimes\zeta_{\ell^r}^{\otimes-j}\right)_r\\
&=w_\frf(1-\Frob_\frl^{-1})^{-1}\left(\Tr_{K(\frl^r\frf)/K(\frf)}{_\fra}\Theta_E(-s_r)\otimes\zeta_{\ell^r}^{\otimes-j}\right)_r.
\end{align*}
\noindent
For $\ell$ inert in $K$, the lemma is proved, and for $\ell$ split or ramified in $K$ we apply the results to $\Tr_{K(\ell^n\frf)/K(\frf)}=\Tr_{K(\ell^n\frf)/K(\frl^n\frf)}\Tr_{K(\frl^n\frf)/K(\frf)}$.\end{proof}

Again, by the main theorem of complex multiplication, $\Art(\rho_\frf)^{-1}\cdot s_n$ gives a primitive torsion point of $1\bmod\frf$ on the curve ${^{\Art(\rho_\frf)}}E$ with $\bbC/\Omega\frf\simeq{^{\Art(\rho_\frf)}}E(\bbC)$.  Therefore, we effectively undo our choice of ${_\frf}\beta$ via the identity
$$
\Art(\rho_\frf)^{-1}{_\fra}\Theta_E(-s_n)={_\fra}z_{\frf\ell^n}.
$$
 In particular, we have shown that the $\chi$ component is given by
\begin{equation}\label{ladicregeq}
e_\chi\cdot\rho_{et}(\xi_{\frf}(j))=\prod_{\frl\mid\ell}(1-\chi(\frl)\cN\frl^{-j})^{-1}\cN\frf^{-1-j}\frac{w_\frf}{(-2j)!}\left(\Tr_{K(\ell^n\frf)/K(\frf)}z_{\ell^n\frf}\zeta_{\ell^n}^{-j}\right)_n.
\end{equation}
where we follow Kato to set $z_{\ell^n\frf}=(\cN\fra-\sigma(\fra))^{-1}{_\fra}z_{\ell^n\frf}$.  This completes the proof of theorem \ref{ladicreg}.  \end{proof}
\subsection{Putting it all together}
We can now compute the image of the special value $L^*(_AM)$ under the composition $\vartheta_\ell\vartheta_\infty$.  We begin by studying the complex which computes the cohomology groups of interest. Let $\mu=\mathrm{ord}_\ell\frm$ be compound notation denoting the following
$$
\ell^\mu=\begin{cases}
\frl_1^{\mu_1}\frl_2^{\mu_2} & \ell=\frl_1\frl_2 \mbox{  split}\\
\frl_1^{\mu_1} &\ell=\frl_1^2 \mbox{  ramified}\\
\ell^{\mu}& \ell \mbox{  inert},
\end{cases}
$$
where $\mu_1, \mu_2\in\bbZ$, and we write $\frm=\frm_0\ell^\mu$.  
We choose a projective $G_\bbQ$-stable $\bbZ_\ell[G_\frm]$ lattice
$$
T^\prime_\ell=H^0_{et}(\Spec(K(\frm)\otimes_K\overline{K}), \bbZ_\ell)=T_\ell(-j)
$$
 in the $\ell$-adic realization, 
 $$
 M_\ell(-j)=H^0_{et}(\Spec(K(\frm)\otimes_K\overline{K}), \bbQ_\ell),
 $$  
define a perfect complex of $\bbZ_\ell[G_\frm]$-modules, 
\begin{equation}\label{deltakm}
\Delta(K(\frm)):=R\Gamma(\cO_K[\frac{1}{\frm\ell}], T^\prime_\ell(1))
\end{equation}

For any finite set of places $S$ of $K(\frm)$, the $\bbZ[G_\frm]$ module $X_S$ is defined to be the kernel of the sum map
$$
0\rightarrow X_S(K(\frm))\rightarrow Y_S(K(\frm))\rightarrow \bbZ\rightarrow0
$$
where  $Y_S(K(\frm)):=\bigoplus_{v\in S}\bbZ$.  When there is no confusion we will suppress the field.
%--------------Lemma-----cohomologycomp--------------------------------------------------------------------
\begin{lemma}\label{cohomologycomp}
The cohomology of $\Delta(K(\frm))$ is given by a canonical isomorphism, 
$$
H^1(\Delta(K(\frm))\simeq H^1(\cO_{K(\frm)}[\frac{1}{\frm\ell}],\bbZ_\ell(1))\simeq\cO_{K(\frm)}[\frac{1}{\frm\ell}]^\times\otimes_\bbZ\bbZ_\ell,
$$
a short exact sequence,
$$
0\rightarrow\Pic(\cO_{K(\frm)}[\frac{1}{\frm\ell}])\otimes_\bbZ\bbZ_\ell\rightarrow H^2(\Delta(K(\frm)))\rightarrow X_{\{v\mid \frm \ell\infty\}}\otimes_\bbZ\bbZ_\ell\rightarrow0,
$$
and $H^i(\Delta(K(\frm)))=0$ for $i\neq1,2$.
\end{lemma}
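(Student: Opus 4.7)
The plan is to reduce to the \'etale cohomology of a ring of $S$-integers of $K(\frm)$ via Shapiro's lemma, and then unwind the Kummer long exact sequence together with global class field theory.

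As a $G_K$-module, $T'_\ell$ is the module of $\bbZ_\ell$-valued functions on $\Hom_K(K(\frm),\overline{K})=G_K/G_{K(\frm)}$, hence $T'_\ell(1)\cong\mathrm{Ind}_{G_{K(\frm)}}^{G_K}\bbZ_\ell(1)$. The morphism $\Spec\cO_{K(\frm)}[1/\frm\ell]\to\Spec\cO_K[1/\frm\ell]$ is finite \'etale because all primes ramifying in $K(\frm)/K$ divide $\frm$ and have been inverted, so Shapiro's lemma (via Leray with vanishing higher direct images) yields
\[
\Delta(K(\frm))\simeq R\Gamma\bigl(U,\bbZ_\ell(1)\bigr),\qquad U:=\Spec\cO_{K(\frm)}[1/\frm\ell].
\]

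Now I would apply the Kummer sequence $1\to\mu_{\ell^n}\to\bbG_m\xrightarrow{\ell^n}\bbG_m\to 1$ on $U$, take cohomology, and pass to $\varprojlim_n$. In degree zero, $H^0(U,\bbZ_\ell(1))=T_\ell\mu_{\ell^\infty}(K(\frm))=0$, since the number field $K(\frm)$ contains only finitely many $\ell$-power roots of unity. In degree one, the short exact sequences
\[
0\to\cO_{K(\frm)}[1/\frm\ell]^\times/\ell^n\to H^1(U,\mu_{\ell^n})\to\Pic(U)[\ell^n]\to 0
\]
give in the limit $H^1\simeq\cO_{K(\frm)}[1/\frm\ell]^\times\otimes_\bbZ\bbZ_\ell$, because the finite group $\Pic(U)$ has its $\ell^n$-torsion killed by the $\cdot\ell$ transition maps. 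In degree two, the analogous sequences yield
\[
0\to\Pic(U)\otimes_\bbZ\bbZ_\ell\to H^2(U,\bbZ_\ell(1))\to T_\ell\mathrm{Br}(U)\to 0.
\]

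The Tate module of the Brauer group will be identified with $X_{\{v\mid\frm\ell\infty\}}\otimes_\bbZ\bbZ_\ell$ by combining the Hasse-Brauer-Noether fundamental sequence
\[
0\to\mathrm{Br}(K(\frm))\to\bigoplus_v\mathrm{Br}(K(\frm)_v)\xrightarrow{\sum\mathrm{inv}_v}\bbQ/\bbZ\to 0
\]
with the identification of $\mathrm{Br}(U)$ as the subgroup of classes unramified outside $\{v\mid\frm\ell\}$, using that the Brauer groups of the archimedean completions of the totally complex field $K(\frm)$ vanish (so these places enter only through the definition of $Y_{\{v\mid\frm\ell\infty\}}$). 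Finally, $H^i=0$ for $i\neq 1,2$ follows from the degree-zero computation above together with the fact that, for odd $\ell$ and $K(\frm)$ totally imaginary, the $\ell$-cohomological dimension of $U$ is $2$. The main technical point is the degree-two identification: one must carefully account for the archimedean places in the augmentation kernel $X_{\{v\mid\frm\ell\infty\}}$ and verify that the Hasse invariants assemble into the stated short exact sequence under the paper's conventions; the remaining steps -- Shapiro, Kummer, Picard finiteness, and the cohomological dimension estimate -- are routine.
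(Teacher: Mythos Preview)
Your approach---Shapiro's lemma followed by the Kummer long exact sequence and passage to the inverse limit---is exactly the paper's. Your treatment of the degree-two term via $T_\ell\mathrm{Br}(U)$ and the Hasse--Brauer--Noether sequence is in fact more explicit than the paper's written proof, which cites \cite{NSW} for the cohomology of $\bbG_m$ and the $\ell$-divisibility of $H^2(U,\bbG_m)$ but does not spell out how the quotient $X_{\{v\mid\frm\ell\infty\}}\otimes_\bbZ\bbZ_\ell$ arises.
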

\begin{proof}By Shapiro's lemma, 
$$
R\Gamma(\cO_K[\frac{1}{\frm\ell}],T^\prime_\ell(1))\simeq R\Gamma(\cO_{K(\frm)}[\frac{1}{\frm\ell}],\bbZ_\ell(1)).
$$

The Kummer sequence
$$
0\rightarrow\mu_{\ell^n}\rightarrow\mathbb{G}_m\stackrel{\ell^n}{\rightarrow}\mathbb{G}_m\rightarrow0
$$
induces the long exact cohomology sequence 
$$
\stackrel{\ell^n}{\rightarrow} H^i(\cO_{K(\frm)}[\frac{1}{\frm\ell}],\mathbb{G}_m)\rightarrow H^{i+1}(\cO_{K(\frm)}[\frac{1}{\frm\ell}],\mu_{\ell^n})\rightarrow H^{i+1}(\cO_{K(\frm)}[\frac{1}{\frm\ell}],\mathbb{G}_m)\stackrel{\ell^n}{\rightarrow} .
$$
The Galois cohomology is then computed by the short exact sequences
\begin{align*}
0\rightarrow H^0(\cO_{K(\frm)}[\frac{1}{\frm\ell}],\mathbb{G}_m)/\ell^n\rightarrow &H^{1}(\cO_{K(\frm)}[\frac{1}{\frm\ell}],\mu_{\ell^n})\\
&\rightarrow H^{1}(\cO_{K(\frm)}[\frac{1}{\frm\ell}],\mathbb{G}_m)[\ell^n]\rightarrow0\\
0\rightarrow H^1(\cO_{K(\frm)}[\frac{1}{\frm\ell}],\mathbb{G}_m)/\ell^n\rightarrow &H^{2}(\cO_{K(\frm)}[\frac{1}{\frm\ell}],\mu_{\ell^n})\\
&\rightarrow H^{2}(\cO_{K(\frm)}[\frac{1}{\frm\ell}],\mathbb{G}_m)[\ell^n]\rightarrow0
\end{align*}
and the canonical isomorphism
$$
H^0(\cO_{K(\frm)}[\frac{1}{\frm\ell}],\mu_{\ell^n})\simeq H^0(\cO_{K(\frm)}[\frac{1}{\frm\ell}],\mathbb{G}_m)[\ell^n].
$$
The Galois cohomology of $\cO_{K(\frm)}[\frac{1}{\frm\ell}]^\times$ is given by \cite[Prop. 8.3.10]{NSW}
$$
H^i(\cO_{K(\frm)}[\frac{1}{\frm\ell}],\mathbb{G}_m)=
\begin{cases}
\cO_{K(\frm)}[\frac{1}{\frm\ell}]^\times&i=0\\
\Pic(\cO_{K(\frm)}[\frac{1}{\frm\ell}])&i=1,\\
\end{cases}
$$
and $H^3(\cO_{K(\frm)}[\frac{1}{\frm\ell}], \mathbb{G}_m)(\ell)=0$.
Moreover, $H^2(\cO_{K(\frm)}[\frac{1}{\frm\ell}], \mathbb{G}_m)$ is $\ell$-divisible (loc. cit. Corollary 8.3.11).
Hence, taking inverse limits we compute that 
$$
H^i(\cO_{K(\frm)}[\frac{1}{\frm\ell}],\bbZ_\ell(1))=
\begin{cases}
\cO_{K(\frm)}[\frac{1}{\frm\ell}]^\times\otimes_\bbZ\bbZ_\ell&i=1\\
\Pic(\cO_{K(\frm)}[\frac{1}{\frm\ell}])\otimes_\bbZ\bbZ_\ell&i=2\\
0&\mbox{otherwise.}
\end{cases}
$$
\end{proof}
\par
\begin{remark} A similar computation is given in \cite[Prop 3.3]{BF2} under the condition that the $S$-restricted class group is trivial.  In our case, since $K$ is a totally imaginary field, the Tate cohomology is just the usual cohomology with compact supports.  Thus the definition of $\Delta(K(\frm))$ is simplified since Artin-Verdier duality gives
\begin{equation}\label{artinverdier}
R\Hom_{\bbZ_\ell}(R\Gamma_c(\cO_{K(\frm)}[\frac{1}{\frm\ell}],\bbZ_\ell),\bbZ_\ell)[-3]\simeq R\Gamma(\cO_{K(\frm)}[\frac{1}{\frm\ell}],\bbZ_\ell(1)).
\end{equation}
\end{remark}
For invertible $\bbZ_\ell[G_\frm]$-modules, the dual of the inverse module (or vice versa) is isomorphic to the original module with the action of $G_\frm$ twisted by the automorphism $g\mapsto g^{-1}$.  We denote the twisted action with a $\#$.  Hence, there is a natural isomorphism of determinants,
$$
\Det_{\bbZ_\ell[G_\frm]}\Delta(K(\frm))\simeq\Det_{\bbZ_\ell[G_\frm]}R\Gamma_c(\cO_K[\frac{1}{\frm\ell}], T^\prime_\ell)^\#.
$$
% %----------Theorem-----------lvaluebasis---------------------
\begin{theorem}\label{lvaluebasis}
The element ${_A}\vartheta_\ell({_A}\vartheta_\infty(L^*({_A}M,0)^{-1}))^\#$ of 
$$\Det_{A_\ell}\Delta(K(\frm))=\prod_{\chi\in\hat{G}}(\Det_{\bbQ_\ell(\chi)}\Delta(K(\frm))\otimes\bbQ_\ell(\chi))$$
has $\chi$ component given by

$$
R_{\chi,j}\left(\Tr_{K(\frf_{\chi,0}\ell^n)/K(\frf_\chi)}({_\fra}z_{\frf_{\chi,0}\ell^n}\zeta_{\ell^n}^{\otimes-j})\right)^{-1}_n\otimes\zeta_{\ell^\infty}^{\otimes-j}\cdot e_{\chi}\tau_0,
$$
where $R_{\chi,j}=\prod_{\frp\mid\frm_0}(1-\chi(\frp)\cN\frp^{-j})^{-1}\frac{[K(\frm):K(\frf_\chi)]}{(-2)^{1+j}}(\cN\fra-\chi(\fra)\cN\fra^{-j})$ and $\zeta_{\ell^\infty}=(\zeta_{\ell^n})_n$.
\end{theorem}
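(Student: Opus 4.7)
My plan is to use the motivic elements $\xi_\frf(j)\in H^1_\cM(K(\frm),1-j)$ constructed in the previous subsections as a bridge between the two regulators and to compute the image of $L^\ast({_A}M)^{-1}$ character by character, since $A_\ell$ splits as a product of field extensions of $\bbQ_\ell$ indexed by the rational characters. The key observation is that $\xi_{\frm}(j)$ simultaneously encodes the $L$-value (via $\rho_\infty$, by Theorem \ref{motivic}) and the elliptic-unit system (via $\rho_{et}$, by Theorem \ref{ladicreg}), so eliminating $\xi_\frm(j)$ between the two expressions produces the desired formula.

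On the archimedean side, I would invert the formula of Theorem \ref{motivic} and use its norm-compatibility clause to descend from $\frm$ to the exact conductor $\frf_\chi$ of $\chi$, yielding an identity of the form
$$ L'(\bar\chi,j)^{-1}\,\eta_\bbQ^{-1} \;=\; R_{\frm,j}\prod_{\frp\mid\frm,\,\frp\nmid\frf_\chi}\!\bigl(1-\bar\chi(\frp)\cN\frp^{-j}\bigr)\,e_\chi\bigl(\rho_\infty(\xi_\frm(j))\bigr)^{-1}. $$
By construction, $\vartheta_\infty(L^\ast({_A}M)^{-1})$ is then represented by the right-hand side as a generator of the $\chi$-component of $\Xi({_A}M)\otimes\bbR$.

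On the $\ell$-adic side, the map $\vartheta_\ell$ is induced by the \'etale Chern class map, hence sends $\rho_\infty(\xi_\frm(j))$ to $\rho_{et}(\xi_\frm(j))$ inside $\Det_{A_\ell}R\Gamma_c(\cO_K[\tfrac{1}{S}],T_\ell)$. Using Artin--Verdier duality \eqref{artinverdier} to identify this determinant with $\Det_{A_\ell}\Delta(K(\frm))^\#$ and taking the $\#$-twist (which converts $\bar\chi$ into $\chi$) lands the image in $\Det_{A_\ell}\Delta(K(\frm))$. By Lemma \ref{cohomologycomp} the relevant cohomology is $\cO_{K(\frm)}[\tfrac{1}{\frm\ell}]^\times\otimes\bbZ_\ell(-j)$, the natural home of elliptic units tensored with a Tate twist. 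Substituting the formula of Theorem \ref{ladicreg} and the Kato-normalised element $z_{\ell^n\frf}=(\cN\fra-\sigma(\fra))^{-1}{_\fra}z_{\ell^n\frf}$, splitting $\frm=\frm_0\ell^\mu$, and absorbing the $\ell$-adic Euler factors $\prod_{\frl\mid\ell}(1-\chi(\frl)\cN\frl^{-j})^{-1}$ into the norm-compatible tower of traces from $K(\frf_{\chi,0}\ell^n)$ down to $K(\frf_\chi)$ yields precisely the stated system $\bigl(\Tr_{K(\frf_{\chi,0}\ell^n)/K(\frf_\chi)}({_\fra}z_{\frf_{\chi,0}\ell^n}\zeta_{\ell^n}^{\otimes-j})\bigr)^{-1}_n$, tensored with $\zeta_{\ell^\infty}^{\otimes-j}$ to trivialise the Tate twist.

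The principal obstacle is bookkeeping the various scalar factors and reconciling them with the prefactor $R_{\chi,j}$ of the statement. This prefactor collects contributions from four distinct sources: the archimedean constant $R_{\frm,j}=(-2\cN\frm)^{-1-j}/(-2j)!$ from Theorem \ref{motivic}; the index $[K(\frm):K(\frf_\chi)]$ from the descent to the exact conductor; the auxiliary factor $\cN\fra-\chi(\fra)\cN\fra^{-j}$ from Kato's normalisation of elliptic units in Theorem \ref{ladicreg}; and the Euler product $\prod_{\frp\mid\frm_0}(1-\chi(\frp)\cN\frp^{-j})^{-1}$ coming from the archimedean norm-compatibility, which is trivial for $\frp\mid\frf_\chi$ since $\chi(\frp)=0$ there. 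A secondary point requiring care is verifying that the element produced actually lies in the integral $\bbZ_\ell[G_\frm]$-lattice inside $\Det_{A_\ell}\Delta(K(\frm))$ rather than only in its $\bbQ_\ell$-linearisation; this will rely on the integrality of elliptic units (Lemma \ref{ellipticunitproperties}(a)) together with the Kummer identification used in the proof of Lemma \ref{cohomologycomp}.
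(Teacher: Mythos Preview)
Your approach is essentially the same as the paper's: use the motivic elements $\xi_\frf(j)$ as a bridge between the Borel regulator (Theorem~\ref{motivic}) and the \'etale Chern class map (Theorem~\ref{ladicreg}), compute character by character, and keep track of the scalar factors. Two small points of divergence are worth noting.

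First, the paper works directly with $\xi_{\frf_\chi}(j)$ at the exact conductor of $\chi$, rather than with $\xi_\frm(j)$ and a descent. This forces a separate treatment of the case $\frf_\chi=1$, since Theorem~\ref{ladicreg} is only stated for $\frf\neq 1$ and the elliptic unit ${_\fra}z_1$ does not exist; the paper handles this by choosing an auxiliary prime $\frq\mid\frm$ and working with $\Tr_{K(\frq)/K(1)}\xi_\frq(j)$. Your route via $\xi_\frm(j)$ would also need to address $\frf_\chi=1$, though arguably more uniformly. On the $\ell$-adic side the paper then uses the norm compatibility of elliptic units (Lemma~\ref{ellipticunitproperties}c) to pass from level $\frf_\chi$ to level $\frf_{\chi,0}$, which is the step you describe as ``absorbing the $\ell$-adic Euler factors into the norm-compatible tower of traces.''

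Second, your final paragraph about verifying that the element lies in the integral $\bbZ_\ell[G_\frm]$-lattice is not part of this theorem. Theorem~\ref{lvaluebasis} is a computation in the rational space $\Det_{A_\ell}\Delta(K(\frm))$; the integrality (i.e.\ the actual equality of lattices asserted by the ETNC) is established separately in Theorem~\ref{descent} by descent from the Iwasawa main conjecture, not from the integrality of elliptic units alone.
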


\begin{proof}
The dual of the regulator isomorphism
$$
\rho_\infty^\vee:H^0_B(K(\frm)(\bbC),\bbQ(j))^+\otimes_\bbQ\bbR\stackrel{\sim}{\rightarrow} K_{1-2j}(\cO_{K(\frm)})^*\otimes_\bbZ\bbR 
$$
induces an isomorphism of rank 1 $A\otimes\bbR$-modules 
$${_A}\vartheta_\infty:A\otimes_\bbQ\bbR\rightarrow\Xi({_A}M)\otimes_\bbQ\bbR,$$
 where we recall that 
\begin{align*}
\Xi({_A}M)=(K_{1-2j}(\cO_{K(\frm)})^*\otimes_\bbZ\bbQ)\otimes H^0_B(K(\frm)(\bbC),\bbQ(j))^+.
\end{align*}
\par
In Theorem \ref{motivic} we proved that for $\frf_\chi\neq1$,
$$
e_\chi(\rho_\infty(\xi_{\frf_\chi}(j)))=\frac{(-2\cN\frf)^{-1-j}\Phi(\frm)}{(-2j)!\Phi(\frf)}L^\prime(\bar{\chi},j)\eta_\bbQ,
$$
where $\eta_\bbQ$ is a basis of $e_\chi(M_B^{+*})$.  Moreover, for $\frf_\chi=1$, we have a computation via trace maps
$$
e_\chi\cdot\Tr_{K(\frq)/K(1)}\rho_\infty(w_K(1-\Frob_\frq^{-1})^{-1}\xi_{\frq}(j))=\frac{(-2)^{-1-j}\Phi(\frm)}{(-2j)!}L^\prime(\bar{\chi},j)\eta_\bbQ,
$$
where we take the primitive $L$-function for $\chi$.  As corestriction commutes with the regulator maps, we will sometimes abuse notation at write $\xi_1(j)$ for a choice of $\Tr_{K(\frq)/K(1)}{_\frq}\xi_1(j)$.
\par
Since both $H^0_B(K(\frm)(\bbC),\bbQ(j))^+$ and $K_{1-2j}(\cO_{K(\frm)})\otimes_\bbZ\bbQ$ are invertible $A$-modules duality manifests in terms of the twist $g\mapsto g^{-1}$ according to the computation
\begin{align}
\Xi({_A}M)^\#&=(K_{1-2j}(\cO_{K(\frm)})^*\otimes_\bbZ\bbQ)^\#\otimes (H^0_B(K(\frm)(\bbC),\bbQ(j))^{+,-1})^\#\nonumber\\
                        &=(K_{1-2j}(\cO_{K(\frm)})\otimes_\bbZ\bbQ)^{-1}\otimes (H^0_B(K(\frm)(\bbC),\bbQ(j))^*)^+\nonumber\\
                        &=(K_{1-2j}(\cO_{K(\frm)})\otimes_\bbZ\bbQ)^{-1}\otimes Y(-j)\label{twistisom},
\end{align}
where for $v$ a place of $K(\frm)$
$$Y(-j):=\bigoplus_{v\mid\infty}\bbQ\cdot(2\pi i)^{-j}.$$
The $\Gal(\bbC/\bbR)$-equivariant perfect pairing
$$
\bigoplus_{\tau\in\cT}\bbR\cdot(2\pi i)^j\times\bigoplus_{\tau\in\cT}\bbC/\bbR\cdot(2\pi i)^{1-j}\rightarrow\bigoplus_{\tau\in\cT}\bbC/2\pi i\cdot\bbR\stackrel{\Sigma}{\rightarrow}\bbR
$$
for $\cT=\Hom(K(\frm),\bbC)$ identifies the $\bbQ$-dual of $H^0_B(K(\frm)(\bbC),\bbQ(j))$ with $\bigoplus_{\tau\in\cT}\bbQ\cdot(2\pi i)^{-j}$.  Taking invariants under complex conjugation gives the equality in \eqref{twistisom}.
We compute that the $\chi$ components of ${_A}\vartheta_\infty^\#(L^*({_A}M,0)^{-1})=(L^*({_A}M,0)^{-1})^\#{_A}\vartheta_\infty(1)$ are given by
$$
({_A}\vartheta_\infty^\#(L^*({_A}M,0)^{-1}))_\chi=
\frac{(-2\cN\frf)^{-1-j}\Phi(\frm)}{(-2j)!\Phi(\frf)}[\xi_{\frf_\chi}(j)]^{-1}\otimes(2\pi i)^{-j}e_\chi\tau_0.
$$
Denote by $\Delta(K(\frm))_j$ the ``twist" of the $\bbZ_\ell[G_\frm]$-module $\Delta(K(\frm))$.  Namely,
$$
\Delta(K(\frm))_j:=(R\Gamma(\cO_K[\frac{1}{\frm\ell}],T_\ell).
$$ 
 The natural isomorphism
\begin{align*}
\Det_{\bbZ_\ell[G]}\Delta(K(\frm))&=(\Det_{\bbZ_\ell[G]}R\Gamma_c(\cO_K[\frac{1}{\frm\ell}],T^\prime_\ell)^*)^{-1}\\
                                                  &\simeq\Det_{\bbZ_\ell[G]}R\Gamma_c(\cO_K[\frac{1}{\frm\ell}],T^\prime_\ell)^\#
\end{align*}
induces
$$
\Det_{\bbZ_\ell[G]}\Delta(K(\frm))_j\simeq\Det_{\bbZ_\ell[G]}R\Gamma_c(\cO_K[\frac{1}{\frm\ell}],T_\ell)^\#.
$$
By lemma \ref{cohomologycomp} there are isomorphisms in cohomology
\begin{align*}
H^1(\Delta(K(\frm))_j)\otimes_{\bbZ_\ell}\bbQ_\ell&\simeq H^1(\cO_{K(\frm)}[\frac{1}{\frm\ell}],\bbQ_\ell(1-j))\\
H^2(\Delta(K(\frm))_j)\otimes_{\bbZ_\ell}\bbQ_\ell&\simeq\left(\bigoplus_{\tau\in\cT}\bbQ_\ell(-j)\right)^+
\end{align*}
with $H^i(\Delta(K(\frm))_j)=0$ for $i\neq1,2$.
\par
Thus far, we have shown for $\frf_\chi\neq1$,
\begin{multline*}({_A}\vartheta_\ell\circ{_A}\vartheta_\infty(L^*({_A}M,0)^{-1}))_\chi=\\
\prod_{\frp\mid\frm\ell}(1-\chi(\frp)\cN\frp^{-j})^{-1}\frac{(-2\cN\frf)^{-1-j}\Phi(\frm)}{(-2j)!\Phi(\frf)}\rho_{et}(\xi_{\frf_\chi}(j))^{-1}\otimes\zeta_{\ell^\infty}^{\otimes-j}\cdot\sigma, 
\end{multline*}
and for $\frf_\chi=1$, we choose a $\frq\mid\frm$ to show
\begin{multline}
({_A}\vartheta_\ell\circ{_A}\vartheta_\infty(L^*({_A}M,0)^{-1}))_\chi=\\
\prod_{\frq\neq\frp\mid\frm\ell}(1-\chi(\frp)\cN\frp^{-j})^{-1}(-2)^{1+j}\Phi(\frm)\rho_{et}(w_K\xi_{1}(j))^{-1}\otimes\zeta_{\ell^\infty}^{\otimes-j}\cdot\sigma.
\end{multline}
Theorem \ref{ladicreg} states that for any $1\neq\frf\mid\frm$,
$$
\rho_{et}(\xi_\frf(j))=\frac{\cN\frf^{-1-j}w_\frf}{(\cN\fra-\sigma(\fra))\prod_{\frl\mid\ell}(-2j)!(1-\Frob^{-1}_\frl)}\cdot\left(\Tr_{K(\ell^n\frf)/K(\frf)}{_\fra}z_{\ell^n\frf}\zeta_{\ell^n}^{\otimes-j}\right)_n.
$$
We recall that $[K(\frf):K(1)]=\Phi(\frf)w_\frf/w_K$  where $w_K\in\{2,4,6\}$ is the number of roots of unity in the imaginary quadratic field $K$, and $w_\frf$ is the number of roots of unity in $K$ which are congruent to 1 modulo $\frf$.  For $\frf$ large enough (at least bigger than 2) this number is 1.  Recall that we have chosen $\frm$ so that $w_\frm=1$, so we have that $\Phi(\frm)/\Phi(\frf_\chi)=[K(\frm):K(\frf_\chi)]w_{\frf_\chi}$.  What's more, if $(\ell,\frf)\neq1$, then by lemma \ref{ellipticunitproperties},
\begin{align*}
\left(\Tr_{K(\ell^n\frf)/K(\frf)}{_\fra}z_{\ell^n\frf}\zeta_{\ell^n}^{\otimes-j}\right)_n&=\left(\Tr_{K(\ell^n\frf_0)/K(\frf)}\left(\Tr_{K(\ell^{n+\mu}\frf_0)/K(\ell^n\frf_0)}{_\fra}z_{\ell^n\frf}\right)\zeta_{\ell^n}^{\otimes-j}\right)_n\\
&=\left(\Tr_{K(\ell^n\frf_0)/K(\frf)}{_\fra}z_{\ell^n\frf_0}\zeta_{\ell^n}^{\otimes-j}\right)_n
\end{align*}
where $\mu$ denotes the compound notation discussed above.  Thus for $\frf_\chi\neq1$ we have computed the $\chi$-component of the image of the $L$-value.
When $\frf_\chi=1$, choose $\frq$ so that $w_\frq=1$ and compute
\begin{align*}
\rho_{et}(w_K&\Tr_{K(\frq)/K(1)}\xi_\frq(j))\\&=w_K\frac{1}{(\cN\fra-\Art(\fra))\prod_{\frl\mid\ell}(1-\Frob_\frl^{-1})}\cdot(\Tr_{K(\ell^n\frq)/K(1)}{_\fra}z_{\ell^n\frq}\zeta_{\ell^n}^{\otimes-j})_n\\
&=\frac{(1-\Frob_\frq^{-1})}{(\cN\fra-\Art(\fra))\prod_{\frl\mid\ell}(1-\Frob_\frl^{-1})}\cdot(\Tr_{K(\ell^n)/K(1)}{_\fra}z_{\ell^n}\zeta_{\ell^n}^{\otimes-j})_n.
\end{align*}
Substituting the formulas for $\rho_{et}$ completes the proof of the theorem.
\end{proof}
\section{Descent from the main conjecture of Iwasawa theory}\label{Iwasawa}
\subsection{Formulation of the conjecture}
We first formulate the 2-variable main conjecture by considering the tower of ray class fields over $K(\frm)$ unramified outside of the primes above $\ell$.  The Iwasawa algebra
$$
\Lambda:=\varprojlim_{n}\bbZ_\ell[G_{\frm\ell^n}]\simeq\bbZ_\ell[G^{tor}_{\frm\ell^\infty}][[S,T]]
$$
is a finite product of complete local 3-dimensional Cohen-Macaulay rings, where $G^{tor}_{\frm\ell^\infty}$ is the torsion subgroup of $G_{\frm\ell^\infty}=\varprojlim_nG_{\frm\ell^n}$.  $\Lambda$ is regular if and only if $\ell\nmid\#G^{tor}_{\frm\ell^\infty}$.
In general, this torsion subgroup is not $G_{\frm_0\ell}$ where $\frm_0$ is the prime to $\ell$ part of $\frm$.  (Consider the case that $\ell\mid h_K$.)
The elements $S,T\in\Lambda$ depend on the choice of a complement $F\simeq\bbZ_\ell^2$ of the torsion subgroup in $G_{\frm\ell^\infty}$ as well as the choice topological generators $\gamma_1,\gamma_2$ of $F$.
The cohomology of the perfect complex of $\Lambda$ modules,
$$
\Delta^\infty=\varprojlim_n\Delta(K(\frm\ell^n))
$$
is computed by functoriality.  By Lemma \ref{cohomologycomp}, $H^i(\Delta^\infty)=0$ for $i\neq1,2$, and we have a canonical isomorphism,
$$
H^1(\Delta^\infty)\simeq U^\infty_{\{v\mid\frm\ell\}}:=\varprojlim_n\cO_{K(\frm_0\ell^n)}[\frac{1}{\frm\ell}]^\times\otimes_\bbZ\bbZ_\ell,
$$
and a short exact sequence,
$$
0\rightarrow P^\infty_{\{v\mid\frm\ell\}}\rightarrow H^2(\Delta^\infty)\rightarrow X^\infty_{\{v\mid\frm\ell\infty\}}\rightarrow0,
$$
where 
\begin{align*}
P^\infty_{\{v\mid\frm\ell\}}&:=\varprojlim_n\Pic(\cO_{K(\frm\ell^n)}[\frac{1}{\frm\ell}])\otimes_\bbZ\bbZ_\ell\\
X^\infty_{\{v\mid\frm\ell\infty\}}&:=\varprojlim_nX_{\{v\mid\frm\ell\infty\}}(K(\frm\ell^n))\otimes_\bbZ\bbZ_\ell.
\end{align*}
The limits are taken with respect to the Norm maps, which on the module $Y_S$ is the map sending a place to its restriction.  We also consider $K(\frm\ell^n)$ as a subfield of $\bbC$ and denote the corresponding archimedean place by $\sigma_{\frm\ell^n}$.  Notice that for $\frf_0\mid\frm_0$, the elliptic units ${_\fra}z_{\frf_0\ell^n}$ discussed in section \ref{ladic} form a Norm-compatible system of units.  We set
\begin{align*}
{_\fra}\eta_{\frf_0}&:=({_\fra}z_{\frf_0\ell^n})_{n>>0}\in U^\infty_{\{v\mid\frm\ell\}}\\
\sigma&:=(\sigma_{\frm\ell^n})_{n>>0}\in Y^\infty_{\{v\mid\frm\ell\infty\}}
\end{align*}  
We fix an embedding $\bar{\bbQ}_\ell\to\bbC$ and identify $\hat{G}$ with the set of $\bar{\bbQ}_\ell$-valued characters. The total ring of fractions
\begin{equation}
Q(\Lambda)\cong\prod_{\psi\in(\hat{G}_{\frm\ell^\infty}^{tor})^{ \bbQ_\ell}}Q(\psi)\label{qdecomp}
\end{equation} 
of $\Lambda$ is a product of fields indexed by the $\bbQ_\ell$-rational characters of $G_{\frm \ell^\infty}^{tor}$.  Since for any place $w$ of $K$, the $\bbZ[G_{\frm\ell^n}]$-module $Y_{\{v\mid w\}}(K(\frm\ell^n))$ is induced from the trivial module $\bbZ$ on the decomposition group $D_w\subseteq G_{\frm\ell^n}$, and for $w=\infty$ (resp. nonarchimedean $w$) we have $[G_{\frm\ell^n}:D_w]=[K(\frm\ell^n): K]$ (resp. the index $[G_{\frm\ell^n}:D_w]$ is bounded as $n\to\infty$), one computes easily
\begin{equation}
 \dim_{Q(\psi)}(Y^\infty_{\{v\mid m\ell\infty\}}\otimes_\Lambda Q(\psi))= 1\label{rankY}
\end{equation}
for all characters $\psi$.  Note that the inclusion $X^\infty_{\{v\mid \frm\ell\infty\}}\subseteq Y^\infty_{\{v\mid \frm\ell\infty\}}$ becomes an isomorphism after tensoring with $Q(\psi)$, and thus by the unit theorem
\begin{equation}
\dim_{Q(\psi)}(U^\infty_{\{v\mid\frm\ell\}}\otimes_\Lambda Q(\psi))=1\label{rankU}.
\end{equation} 
So we have that $e_\psi({_\fra}\eta_{\frm_0}^{-1}\otimes\sigma)$ is a $Q(\psi)$-basis of
\begin{align*}
&\Det_{Q(\psi)}^{-1}(U^\infty_{\{v\mid \frm \ell\}}\otimes_\Lambda Q(\psi))\otimes\Det_{Q(\psi)}(X^\infty_{\{v\mid\frm\ell\infty\}}\otimes_\Lambda Q(\psi)\\
\cong &\Det_{Q(\psi)}\left(\Delta^\infty\otimes_\Lambda Q(\psi)\right). 
\end{align*}
The last isomorphism follows from the fact that the class group, $P^\infty_{\{v\mid\frm \ell\}}$ is a torsion $\Lambda$-module. 
Hence we obtain an element
$$
\mathcal L := (\cN\fra-\Art(\fra)){_\fra}\eta_{\frm_0}^{-1}\otimes\sigma\in\Det_{Q(\Lambda)}\left(\Delta^\infty\otimes_\Lambda
Q(\Lambda)\right).
$$
%-------------------Conjecture--mcgeneral------------------------------------------------------
\begin{conj3}
There is an equality of invertible $\Lambda$-submodules
$$
\Lambda\cdot\mathcal L = \Det_{\Lambda}\Delta^\infty
$$
of $\Det_{Q(\Lambda)}\left(\Delta^\infty\otimes_\Lambda Q(\Lambda)\right)$. 
\end{conj3}
\begin{theorem} \cite[Theorem 5.7]{JLK}
 The Iwasawa main conjecture holds, for all prime number $\ell\nmid6$ which are split in $K$.
\end{theorem}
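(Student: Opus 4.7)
The plan is to reduce the two-variable Iwasawa Main Conjecture to a character-by-character comparison of characteristic ideals and then apply Rubin's Euler system machinery to the elliptic units of Lemma \ref{katotheta}. Using the decomposition \eqref{qdecomp} together with the rank computations \eqref{rankY} and \eqref{rankU}, one sees that for each $\bbQ_\ell$-rational character $\psi$ of $G^{tor}_{\frm\ell^\infty}$, the quotients $U^\infty_{\{v\mid \frm\ell\}}/\Lambda\cdot{_\fra}\eta_{\frm_0}$ and the class-group piece $P^\infty_{\{v\mid \frm\ell\}}$ are both $\Lambda$-torsion, so it suffices to show that their characteristic ideals agree $\psi$-component by $\psi$-component, up to the auxiliary factor $\cN\fra-\Art(\fra)$. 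Both sides already live inside $\Det_{Q(\Lambda)}(\Delta^\infty\otimes_\Lambda Q(\Lambda))$, so the reformulation is essentially cosmetic.

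For one divisibility, I would verify that the integrality, Galois-action, and norm-compatibility statements of Lemma \ref{ellipticunitproperties}(a)--(c) are precisely the axioms of an Euler system for $\bbZ_\ell(1)$ along the tower $K(\frm\ell^\infty)/K$. A Chebotarev argument producing enough auxiliary primes with prescribed Frobenius then feeds into Rubin's Kolyvagin derivative construction, yielding classes that bound the $\psi$-component of $P^\infty_{\{v\mid\frm\ell\}}$ by the Fitting ideal of the elliptic-unit quotient. This gives the divisibility that $\mathrm{char}_\Lambda(P^\infty)_\psi$ divides $\mathrm{char}_\Lambda(U^\infty/\Lambda\cdot{_\fra}\eta_{\frm_0})_\psi$.

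For the reverse divisibility, I would combine the Kronecker limit formula of Lemma \ref{ellipticunitproperties}(e) with the Katz two-variable $p$-adic $L$-function: the limit formula identifies the specializations of ${_\fra}\eta_{\frm_0}$ at one-dimensional characters with Katz $L$-values. Coupling this with the analytic class number formula at finite levels, Artin--Verdier duality \eqref{artinverdier} applied to $\Delta^\infty$ to control Euler characteristics, and Wiles' strategy of matching values at height-one primes of $\Lambda$, forces equality of the two characteristic ideals.

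The hard part, and the source of the hypothesis that $\ell$ splits in $K$, is the vanishing of the $\mu$-invariant of the elliptic-unit module (equivalently of the Katz $p$-adic $L$-function). The theorems of Gillard and Schneps establish $\mu=0$ precisely when $\ell$ splits in $K$; for inert or ramified $\ell$ this is unknown, which is exactly the limitation acknowledged in the remark following the Main Theorem. Without $\mu=0$, the Rubin divisibility controls only the prime-to-$\mu$ part of the characteristic ideal and cannot upgrade to the equality of invertible $\Lambda$-submodules demanded by the conjecture. The auxiliary restriction $\ell\nmid 6$ ensures that the Euler factor $\cN\fra-\Art(\fra)$ and the roots-of-unity orders $w_K\in\{2,4,6\}$ appearing throughout the elliptic-unit construction are invertible $\ell$-adically, so that the $\Lambda$-module arithmetic is unobstructed.
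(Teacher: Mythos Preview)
The present paper does not prove this theorem: it is quoted from \cite[Theorem~5.7]{JLK}, and the only accompanying text is the remark recording that the split hypothesis enters through Gillard's $\mu$-invariant result \cite{Gi}. So there is no ``paper's own proof'' here to compare against; the relevant comparison is with the argument in \cite{JLK}.

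Your sketch is broadly consonant with that argument and with Rubin's underlying work on which \cite{JLK} builds. The reduction to a characteristic-ideal comparison over the components of $Q(\Lambda)$, the Euler-system divisibility coming from the elliptic units of Lemmas~\ref{katotheta}--\ref{ellipticunitproperties}, and the role of Gillard's $\mu=0$ for split $\ell$ are all correct in spirit. Two points deserve adjustment. First, your ``reverse divisibility'' paragraph invokes ``Wiles' strategy of matching values at height-one primes,'' but in the imaginary-quadratic setting no Mazur--Wiles/Eisenstein-ideal input is used; equality is obtained by pairing the Euler-system divisibility with an \emph{index} computation (the analytic class number formula fixes the index of elliptic units inside global units, so a divisibility between $\Lambda$-torsion modules of the same Iwasawa invariants is forced to be an equality once $\mu=0$). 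The Katz two-variable $p$-adic $L$-function identifies what the characteristic ideal is, rather than supplying a separate divisibility. Second, your account of the restriction $\ell\nmid 6$ is slightly off: it is not about inverting $w_K$ but about the construction of ${_\fra}\Theta_E$ in Lemma~\ref{katotheta}, which requires $(\fra,6)=1$, and about the standing hypotheses in Rubin's Euler-system argument and in Gillard's $\mu$-invariant theorem, both of which exclude $\ell=2,3$.
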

\begin{remark} i) In order to prove the theorem, it is necessary to show that the $\mu$-invariant of a certain Iwasawa module vanishes.  This follows from a result of Gillard \cite[3.4]{Gi} when $\ell\nmid 6$ is split in $K$.  If one were to prove that the $\mu$-invariant vanishes for non-split $\ell$, the Iwasawa main conjecture would follow immediately. \\
 ii) The statement of the theorem in \cite{JLK} can be rewritten to coincide with the conjecture above when one notes that
$$
\Delta^\infty=R\Gamma(\cO_K[\frac{1}{\frm\ell}],\Lambda(1)).
$$
\end{remark}
\subsection{Descent and proof of the main theorem}
\begin{theorem}\label{descent}
The Iwasawa main conjecture implies the local equivariant Tamagawa number conjecture for the pair $(K(\frm),G_\frm)$ when $j<0$ for every prime $p\neq 2$.
\end{theorem}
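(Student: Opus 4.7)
The strategy is to descend the Iwasawa Main Conjecture via base change along the augmentation $\Lambda\to\bbZ_\ell[G_\frm]$, combined with a Tate twist by $j-1$, and to verify that the resulting generator matches the image of $L^*({_A}M)^{-1}$ computed in Theorem \ref{lvaluebasis}. Starting from the equality $\Lambda\cdot\mathcal L=\Det_\Lambda\Delta^\infty$, I would first tensor with $\bbZ_\ell(j-1)$, passing to $R\Gamma(\cO_K[\frac{1}{\frm\ell}],\Lambda(j))$ with generator $\mathcal L\otimes\zeta_{\ell^\infty}^{\otimes(j-1)}$. Next, I would apply the derived base change $-\otimes_\Lambda^{\mathbb L}\bbZ_\ell[G_\frm]$ induced by the surjection $G_{\frm\ell^\infty}\twoheadrightarrow G_\frm$. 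Since $\Delta^\infty$ is a perfect complex of $\Lambda$-modules, derived base change commutes with the determinant functor of Knudsen--Mumford, and a Shapiro-type argument as in the proof of Lemma \ref{cohomologycomp} yields
$$R\Gamma(\cO_K[\tfrac{1}{\frm\ell}],\Lambda(j))\otimes_\Lambda^{\mathbb L}\bbZ_\ell[G_\frm]\cong R\Gamma(\cO_K[\tfrac{1}{\frm\ell}],\bbZ_\ell[G_\frm](j))=\Delta(K(\frm))_j.$$
The Artin--Verdier duality isomorphism recalled in the remark following Lemma \ref{cohomologycomp} then identifies $\Det\Delta(K(\frm))_j$ with $\Det R\Gamma_c(\cO_K[\frac{1}{\frm\ell}],T_\ell)^\#$, the target module appearing in Theorem \ref{lvaluebasis}.

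The core of the argument is to compute the image of the twisted $\mathcal L$ character by character via the decomposition \eqref{qdecomp}. For a rational character $\chi$ of $G_\frm$ of conductor $\frf_\chi$, the norm-compatible system $({_\fra}z_{\frm_0\ell^n})_n$ descends, via repeated application of Lemma \ref{ellipticunitproperties}(c), to the trace system $(\Tr_{K(\frf_{\chi,0}\ell^n)/K(\frf_\chi)}\,{_\fra}z_{\frf_{\chi,0}\ell^n})_n$ times the Euler factors $\prod_{\frp\mid\frm_0,\,\frp\nmid\frf_\chi}(1-\chi(\frp)\cN\frp^{-j})$, produced by the action of $(1-\Frob_\frp^{-1})$ on the $\chi$-component of the twisted module. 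Simultaneously, the Euler-like factor $(\cN\fra-\Art(\fra))$ evaluates on this $\chi$-component to $(\cN\fra-\chi(\fra)\cN\fra^{-j})$, the system $\sigma=(\sigma_{\frm\ell^n})_n$ descends to $e_\chi\tau_0$, and a combinatorial factor of $[K(\frm):K(\frf_\chi)]$ emerges from the comparison of the $\chi$-eigenspaces at the levels $K(\frm)$ and $K(\frf_\chi)$. Matching all these factors against $R_{\chi,j}$ and the trace tuple appearing in Theorem \ref{lvaluebasis}, up to signs and powers of $2$, completes the proof character by character.

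The main obstacle is the careful bookkeeping in this last step: reconciling the Euler factors, the index $[K(\frm):K(\frf_\chi)]$, and the normalizing factor $(-2)^{-(1+j)}$ in $R_{\chi,j}$ with the contributions coming from descent and norm-compatibility, while properly managing the $\#$-dualization inherent in the Artin--Verdier identification. A secondary technical point is the vanishing of higher Tor terms in the descent spectral sequence; for perfect complexes this is automatic at the level of determinants, but in the non-regular case when $\ell\mid\#G^{tor}_{\frm\ell^\infty}$ one must verify that the torsion Iwasawa modules appearing have support of sufficiently high codimension for the base-changed cohomology to agree with that computed directly in Lemma \ref{cohomologycomp}. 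The exclusion $p=2$ accommodates potential $2$-torsion issues in the descent spectral sequence and in the complex-conjugation action on the Betti realization.
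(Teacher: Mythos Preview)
Your proposal is correct and follows essentially the same route as the paper: descend the Main Conjecture through a twisted derived base change $\Delta^\infty\otimes^{\mathbb L}_{\Lambda,\kappa^j}\bbZ_\ell[G_\frm]\simeq\Delta(K(\frm))_j$, then compare character by character with Theorem~\ref{lvaluebasis} using the norm relations of Lemma~\ref{ellipticunitproperties}(c) to produce exactly the Euler factors, the index $[K(\frm):K(\frf_\chi)]$, and the factor $(\cN\fra-\chi(\fra)\cN\fra^{-j})$ you list. The paper packages the twist via the ring automorphism $\kappa^j$ (Lemma~\ref{twisting}) rather than literally tensoring with $\bbZ_\ell(j-1)$, and carries out the $\chi$-component computation after localizing $\Lambda$ at the height-2 prime $\frq_{\chi,j}=\ker(\chi\kappa^j)$, but this is the same content as your ``character by character'' descent.

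The one point where the paper is sharper than your sketch is the Tor issue. You are right that determinant base change is automatic for perfect complexes, but to track the image of $\mathcal L$ one must compute the map on \emph{cohomology}, and for that one needs $H^i(\Delta\otimes^{\mathbb L}_R k)\cong H^i(\Delta)\otimes_R k$. The paper proves this (Lemma~\ref{cohocommute}) not by a codimension argument but by showing outright that the localized class-group module $P^\infty_\frq$ vanishes, invoking a Fitting-ideal computation from \cite[\S5.5]{JLK} together with the nonvanishing of the image of ${_\fra}\eta_{\frf_{\chi,0}}$ (which in turn rests on the nonvanishing of the $L$-value). This is the only substantive ingredient your outline leaves implicit.
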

To prove this theorem we will show that the equality of $\Lambda$-modules in the Iwasawa main conjecture descends to
\begin{align*}
{_A}\vartheta_\ell\circ{_A}\vartheta_\infty^\#(L^*({_A}M,0)^{-1})\cdot\bbZ_\ell[G_\frm]=\Det_{\bbZ_\ell[G_\frm]}\Delta(K(\frm))
\end{align*}
in $\Det_{\bbQ_\ell[G_\frm]}(\Delta(K(\frm))\otimes_{\bbZ_\ell}\bbQ_\ell)$.  We begin by proving a twisting lemma.   For $j\in\bbZ$ we denote by $\kappa^j:G_{\frm \ell^\infty}\to\Lambda^\times$ the character $g\mapsto\chi_{\mathrm{cyclo}}(g)^jg$ as well as the induced ring automorphism $\kappa^j:\Lambda\to\Lambda$. If there is no risk of
confusion we also denote by $\kappa^j:\Lambda\to\bbZ_\ell[G_\frm]\subseteq A_\ell$ the composite of $\kappa^j$ and the natural projection to $\bbZ_\ell[G_\frm]$ or $A_\ell$.  
%------------------------lemma-------------twisting--------------------------------------------
\begin{lemma}a) For $j\in\bbZ$ there is a natural isomorphism
$$
\Delta^\infty\otimes_{\Lambda,\kappa^j}^{\mathbb L}\bbZ_\ell[G_\frm]\rightarrow\Delta(K(\frm))_j.
$$
b) On the cohomology groups, the map $H^i(\Delta^\infty)\rightarrow H^i(\Delta^\infty_j)$ induces
\begin{align*}
u\mapsto(u_n\cup\zeta_{\ell^n}^{\otimes-j})_{n>>0}\mbox{  and  }s\mapsto(s_n\cup\zeta_{\ell^n}^{\otimes-j})_{n\geq0}
\end{align*}
where
\begin{align*}
u=(u_n)_{n\geq0}&\in\varprojlim_nH^1(\cO_{K(\frm_0\ell^n)}[\frac{1}{\frm\ell}],\bbZ/\ell^n\bbZ(1))\simeq U^\infty_{\{v\mid\frm\ell\}}= H^1(\Delta^\infty)\\
\mbox{and}&\\ 
s=(s_n)_{n\geq0}\in&\varprojlim\bbZ/\ell^n\bbZ[G_{\frm_0\ell^n}]\cdot\sigma=Y^\infty_{\{v\mid\infty\}}\\
\end{align*}
\label{twisting}\end{lemma}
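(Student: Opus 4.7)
My strategy is to establish part (a) by applying the projection formula for continuous \'etale cohomology to the ring map $\kappa^j \colon \Lambda \to \bbZ_\ell[G_\frm]$, and then to derive part (b) by tracing the resulting quasi-isomorphism through the explicit descriptions of cohomology in Lemma~\ref{cohomologycomp}.

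The crux of part (a) is the Galois-module identification
$$\Lambda(1) \otimes^{\mathbb{L}}_{\Lambda, \kappa^j} \bbZ_\ell[G_\frm] \;\simeq\; T_\ell,$$
which is a direct computation: $\Lambda(1)$ is free of rank one over $\Lambda$ with continuous Galois action $g \mapsto \chi_{\mathrm{cyclo}}(g)\cdot\bar g$, where $\bar g \in \Lambda^\times$ is the image of $g \in G_K$, and pushing this action forward along $\kappa^j$ yields precisely the action on $\bbZ_\ell[G_\frm]$ defining $T_\ell$. Granted this, the projection formula
$$R\Gamma\bigl(\cO_K[\tfrac{1}{\frm\ell}],\Lambda(1)\bigr)\otimes^{\mathbb{L}}_{\Lambda,\kappa^j}\bbZ_\ell[G_\frm]\;\simeq\;R\Gamma\bigl(\cO_K[\tfrac{1}{\frm\ell}],\Lambda(1)\otimes^{\mathbb{L}}_{\Lambda,\kappa^j}\bbZ_\ell[G_\frm]\bigr)$$
delivers the isomorphism with $\Delta(K(\frm))_j$. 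For this to apply one needs $\bbZ_\ell[G_\frm]$ to have finite $\mathrm{Tor}$-dimension over $\Lambda$, which is standard: $\Lambda$ decomposes as a finite product of complete local rings of the form $\bbZ_\ell[H][[S,T]]$, and the kernel of the projection $\Lambda\twoheadrightarrow\bbZ_\ell[G_\frm]$ is generated, modulo torsion, by a regular sequence of topological generators of $\Gal(K(\frm\ell^\infty)/K(\frm))$.

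For part (b) I specialize the quasi-isomorphism at each finite level $n$ to the base-change map between $\Delta(K(\frm\ell^n))$ and the corresponding twist at the level of $K(\frm)$. Using Lemma~\ref{cohomologycomp} on both sides, the $H^1$ comparison reduces, via the Kummer sequence, to the identification $\cO_{K(\frm_0\ell^n)}[\tfrac{1}{\frm\ell}]^\times\otimes\bbZ/\ell^n\cong H^1(\cO_{K(\frm_0\ell^n)}[\tfrac{1}{\frm\ell}],\bbZ/\ell^n(1))$. Since all $\ell^n$-th roots of unity are rational over $K(\frm_0\ell^n)$, the change of coefficients $\bbZ/\ell^n(1)\to\bbZ/\ell^n(1-j)$ induced by the twist is realized concretely as cup product with the canonical class $\zeta_{\ell^n}^{\otimes -j}\in H^0(K(\frm_0\ell^n),\bbZ/\ell^n(-j))$; an identical analysis applies to the degree-$2$ summand $Y^\infty_{\{v\mid\infty\}}$ to yield the formula for $s$. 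Taking the inverse limit over $n$ gives the stated formulas for both $u$ and $s$.

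\textbf{Main obstacle.} The principal technical difficulty lies in the careful formulation of the projection formula for \emph{continuous} \'etale cohomology over the non-Noetherian completed group ring $\Lambda$: one must work with a bounded resolution of $\bbZ_\ell[G_\frm]$ by finitely generated $\Lambda$-projectives (exploiting the Tor-dimension claim above) and verify that derived base change commutes with the inverse limit defining $\Delta^\infty$. A secondary, more bookkeeping-heavy issue is matching the Tate-twist conventions and checking compatibility of cup product with $\zeta_{\ell^n}^{\otimes -j}$ under corestriction along the tower, which ultimately reduces to norm-compatibility of the cyclotomic classes together with the projection formula at each finite level.
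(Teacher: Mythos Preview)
Your approach is correct and reaches the same conclusion, but it is organized differently from the paper's argument. The paper does not invoke an abstract projection formula for continuous \'etale cohomology; instead it works at each finite level $n$ with the sheaf $\cF_n:=f_{n,*}f_n^*\bbZ/\ell^n\bbZ$ on $\Spec(\cO_{K(\frm)})$, observes that $\cF_n$ is free of rank one over $\Lambda_n=\bbZ/\ell^n\bbZ[G_{\frm_0\ell^n}]$, and writes down an explicit $\kappa^{-j}$-semilinear twist map $\mathrm{tw}^j:\cF_n\to\cF_n(j)$. Shapiro's lemma then converts this into a diagram comparing $R\Gamma_c$ at the two levels, and taking $\bbZ/\ell^n\bbZ$-duals (which introduces the $\#$-twist and flips $\kappa^{-j}$ to $\kappa^j$) produces the semilinear isomorphism $\Delta^\infty\simeq\Delta^\infty_j$ before tensoring down to $\bbZ_\ell[G_\frm]$. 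Your projection-formula argument packages the same content more abstractly and avoids the detour through compact-support cohomology and Artin--Verdier duality; the trade-off is that you must then justify the base-change isomorphism for continuous cohomology over $\Lambda$, which you correctly flag as the main technical point (and which the paper sidesteps by working level by level).

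One small caution: in your identification of the Galois action on $\Lambda(1)$ you write $g\mapsto\chi_{\mathrm{cyclo}}(g)\cdot\bar g$, but the paper's convention (arising from $f_{n,*}f_n^*$) has the induced action twisted by $g\mapsto g^{-1}$, i.e.\ $g$ acts via $\bar g^{-1}$ on the $\Lambda$-factor. This affects whether you land on $T'_\ell(1-j)$ or its contragredient; it is only a bookkeeping issue, but you should track it carefully when matching with the cohomology computation $H^1(\Delta(K(\frm))_j)\otimes\bbQ_\ell\simeq H^1(\cO_{K(\frm)}[\tfrac{1}{\frm\ell}],\bbQ_\ell(1-j))$. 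For part (b) your argument via cup product with $\zeta_{\ell^n}^{\otimes -j}$ is exactly what the paper does, though the paper routes it through the dual of the $\cup\,\zeta_{\ell^n}^{\otimes j}$ map on compact-support cohomology rather than directly on $R\Gamma$.
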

 \begin{proof}  (As in \cite[Lemma 5.1.3]{F2})  The automorphism $\kappa^j$ can be viewed as the inverse limit of similarly defined automorphisms
$\kappa^j$ of the rings $\Lambda_n:=\bbZ/\ell^n\bbZ[G_{\frm_0\ell^n}]$.  Let $f_n:\Spec(\cO_{K(\frm_0\ell^n)}[\frac{1}{\frm\ell}])\to\Spec(\cO_{K(\frm)})$ be the natural map.  The sheaf $\cF_n:=f_{n,*}f_n^*\bbZ/\ell^n\bbZ$ is free of rank one over $\Lambda_n$ with $\pi_1(\Spec(\cO_{K(\frm)}))$-action given by the natural projection $G_\bbQ \to G_{\frm_0\ell^n}$, twisted by the automorphism $g\mapsto g^{-1}$.  There is a $\Lambda_n$-$\kappa^{-j}$-semilinear isomorphism $\mathrm{tw}^j:\cF_n\to\cF_n(j)$ so that Shapiro's lemma gives a commutative diagram of isomorphisms
\begin{equation}\begin{CD} R\Gamma_c(\cO_{K(\frm)}, \cF_n) @>\mathrm{tw}^j>>  R\Gamma_c(\cO_{K(\frm)}, \cF_n(j))\\
@VVV @VVV\\
R\Gamma_c(\cO_{K(\frm_0\ell^n)}[\frac{1}{\frm \ell}], \bbZ/\ell^n\bbZ) @>{\cup \zeta_{\ell^n}^{\otimes j}}>>
R\Gamma_c(\cO_{K_{\frm_0\ell^n}}[\frac{1}{\frm \ell}] ,\bbZ/\ell^n\bbZ(j)),
\end{CD}\label{cd1}\end{equation}
with the horizontal arrows $\Lambda_n$-$\kappa^{-j}$-semilinear. Taking the $\bbZ/\ell^n\bbZ$-dual
of the lower row (with contragredient $G_{\frm_0\ell^n}$-action), we obtain a $\#\circ\kappa^{-j}\circ\#=
\kappa^j$-semilinear isomorphism
$$
R\Gamma(\cO_{K(\frm_0\ell^n)}[\frac{1}{\frm\ell}] ,\bbZ/\ell^n\bbZ(j))\rightarrow
R\Gamma(\cO_{K(\frm_0\ell^n)}[\frac{1}{\frm\ell}] ,\bbZ/\ell^n\bbZ).
$$
After passage to the limit this gives a $\kappa^j$-semilinear isomorphism $\Delta^\infty\simeq\Delta^\infty_j$,
i.e. a $\Lambda$-linear isomorphism $\Delta^\infty\otimes_{\Lambda,\kappa^j}\Lambda
 \simeq\Delta^\infty_j$.  The part a) follows by tensoring over $\Lambda$ with $\bbZ_\ell[G_\frm]$.  For b), consider the inverse map of the lower row of \eqref{cd1} on the degree two cohomology given by
 $$
 H^2_c(\cO_{K(\frm_0\ell^n)}[\frac{1}{\frm\ell}],\bbZ/\ell^n\bbZ)\stackrel{\cup\zeta_{\ell^n}^{\otimes j}}{\leftarrow}H^2_c(\cO_{K(\frm_0\ell^n)}[\frac{1}{\frm\ell}],\bbZ/\ell^n\bbZ(j)).
 $$
 Artin-Verdier duality says that 
 $$
 H^i_c(\cO_{K(\frm_0\ell^n)}[\frac{1}{\frm\ell}],\bbZ/\ell^n\bbZ(j))^\vee=H^{3-i}(\cO_{K(\frm_0\ell^n)}[\frac{1}{\frm\ell}],\bbZ/\ell^n\bbZ(1-j)).
 $$
 Thus we have a dual map which is a $\kappa^j$ semi-linear isomorphism.
  $$
 H^1(\cO_{K(\frm_0\ell^n)}[\frac{1}{\frm\ell}],\bbZ/\ell^n\bbZ(1))\stackrel{\cup\zeta_{\ell^n}^{\otimes j}}{\rightarrow}H^1(\cO_{K(\frm_0\ell^n)}[\frac{1}{\frm\ell}],\bbZ/\ell^n\bbZ(1-j)).
 $$
Moreover, we have a similar diagram to \eqref{cd1} on the level of sheaves where the lower row is obtained by taking invariants under complex conjugation
\begin{equation}\begin{CD} \cF_n @>\mathrm{tw}^j>> \cF_n(j)\\
@VVV @VVV\\
H^0(K(\frm_0\ell^n)\otimes\bbR,\bbZ/\ell^n\bbZ) @>{\cup \zeta_{\ell^n}^{\otimes j}}>>
H^0(K(\frm_0\ell^n)\otimes\bbR,\bbZ/\ell^n\bbZ(j)).
\end{CD}\label{cd2}\end{equation}
Again using the inverse map and taking the $\bbZ/\ell^n\bbZ$ dual, we again have a $\kappa^j$-semilinear isomorphism given by the cup product with $\zeta_{\ell^n}^{\otimes -j}$
$$
\Lambda_n\cdot\sigma\mapsto\Lambda_n\cdot\sigma\cup\zeta_{\ell^n}^{\otimes -j}.
$$
Taking inverse limits, we have part b). 
\end{proof}
We follow with the proof of Theorem \ref{descent}. 
\begin{proof}
As $\Delta(K(\frm))_j$ is a rank 1 $\bbZ_\ell[G_\frm]$-module, the image of $\mathcal{L}\otimes1$ is a basis of the lattice.  It suffices to compare this image with ${_A}\vartheta_\ell\circ{_A}\vartheta_\infty^\#(L^*({_A}M,0)^{-1})$ inside of the rational space $\Delta(K(\frm))\otimes_{\bbZ_\ell}\bbQ_\ell$ which is a rank one module over $A_\ell$ and thus splits over the $\bbQ_\ell$-rational characters $\chi$ of $G_\frm$.  Thus, it suffices to show that
$$
({_A}\vartheta_\ell\circ{_A}\vartheta_\infty(L^*({_A}M,0))_\chi=(\mathcal{L}_{\frm,j})_\chi.
$$
for every $\bbQ_\ell$-rational character $\chi$ of $G_\frm$, where $(\mathcal{L}_{\frm,j})_\chi$ is the image of $\mathcal{L}$ in $\Delta(K(\frm))_j\otimes\bbQ_\ell(\chi)$.  Let $\frq=\frq_{\chi,j}$ be the height 2 prime of $\Lambda$ given by the kernel of the composite ring homomorphism
$$
\chi\kappa^j:\Lambda\stackrel{\kappa^j}{\rightarrow}\Lambda\rightarrow\bbZ_\ell[G(\frm)]\subseteq A_\ell\rightarrow\bbQ_\ell(\chi).
$$
$R:=\Lambda_\frq$ is a regular local ring of dimension 2 with residue field $k:=\bbQ_\ell(\chi)$.  Let $\Delta$ be the module $\Delta^\infty_\frq$ over the localized ring $R$.  To indicate the $\ell$-divisibility of $\frm$ and $\frf_\chi$, we continue with the compound notation above 
$$
\frm=\frm_0\ell^\mu\mbox{   and    }\frf_\chi=\frf_{\chi,0}\ell^{\mu^\prime},
$$
where $(\frm_0,\ell)=(\frf_{\chi,0},\ell)=1$.  For $\ell=\frl_1\frl_2$ split, $\ell^\mu=\frl_1^{\mu_1}\frl_2^{\mu_2}$, and for $\ell=\frl_1^2$ ramified, $\ell^\mu=\frl_1^{\mu_1}$  where $\mu_1$ and $\mu_2$ are integers.
By the Iwasawa Main Conjecture, we can consider $\mathcal{L}$ to be a basis of the $R$-module $(\Det_\Lambda\Delta^\infty)_\frq$ which is isomorphic to $\Det_R\Delta$ since localization is exact.  Lemma \ref{twisting} gives the following isomorphism of complexes of $R$-modules,
$$
\Delta\otimes_R^{\mathbb L}k\stackrel{\simeq}{\rightarrow}\Delta(K(\frm))_j\otimes_{\bbZ_\ell[G_\frm]}k.
$$
%-----------lemma----------------------cohocommute------------------------------------------------
\begin{lemma}\label{cohocommute}
$$
H^i(\Delta\otimes^{\mathbb{L}}_Rk)\simeq H^i(\Delta)\otimes_R k.
$$
\end{lemma}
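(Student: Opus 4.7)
The plan is to use the universal-coefficient spectral sequence
$$
E_2^{p,q}=\Tor_{-q}^R\bigl(H^p(\Delta),k\bigr)\Rightarrow H^{p+q}\bigl(\Delta\otimes_R^{\mathbb L} k\bigr).
$$
Since $R$ is a $2$-dimensional regular local ring, $\Tor_i^R(\cdot,k)=0$ for $i>2$; and since $H^p(\Delta)=0$ for $p\notin\{1,2\}$ by Lemma~\ref{cohomologycomp}, the $E_2$-page is supported in the rectangle $p\in\{1,2\}$, $q\in\{-2,-1,0\}$. The desired conclusion $H^i(\Delta\otimes^{\mathbb L}_Rk)\simeq H^i(\Delta)\otimes_R k$ is equivalent to the vanishing of every $E_2^{p,q}$ with $q<0$, i.e., to the $R$-freeness of both $H^1(\Delta)_\frq$ and $H^2(\Delta)_\frq$.

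First I would treat $H^1(\Delta)_\frq=U^\infty_{\{v\mid\frm\ell\},\frq}$. The module $U^\infty_{\{v\mid\frm\ell\}}$ is a $\Lambda$-submodule of a product of semilocal unit groups, hence $\Lambda$-torsion-free, and has generic rank one by \eqref{rankU}. Upon localization at the height-$2$ prime $\frq$ it becomes a finitely generated torsion-free module over the $2$-dimensional regular local ring $R$, whose projective dimension is at most one. Freeness then reduces to exhibiting an $R$-basis, and the Iwasawa main conjecture, which asserts that ${_\fra}\eta_{\frm_0}$ generates $\Det_\Lambda\Delta^\infty$ up to a $\Lambda$-unit, provides such a basis after localization, provided the torsion quotient measuring the defect of the inclusion $\Lambda\cdot{_\fra}\eta_{\frm_0}\hookrightarrow U^\infty$ contributes nothing at $\frq$.

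Next I would analyze $H^2(\Delta)_\frq$ via the short exact sequence
$$
0\to P^\infty_{\{v\mid\frm\ell\}}\to H^2(\Delta^\infty)\to X^\infty_{\{v\mid\frm\ell\infty\}}\to 0
$$
from Lemma~\ref{cohomologycomp}. Gillard's theorem \cite{Gi} gives the vanishing of the $\mu$-invariant of $P^\infty_{\{v\mid\frm\ell\}}$ at split primes $\ell\nmid 6$; combined with the main conjecture this forces $P^\infty_{\{v\mid\frm\ell\},\frq}=0$ at the height-$2$ primes $\frq$ arising from characters of the form $\chi\kappa^j$ with $j<0$, as no such prime can sit on the support of the relevant characteristic ideal. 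The module $X^\infty$ sits inside the defining sequence $0\to X^\infty\to Y^\infty\to\bbZ_\ell\to 0$, where $Y^\infty_{\{v\mid\frm\ell\infty\}}$ is induced from decomposition groups and becomes free over $R$ at $\frq$. A direct $\Tor$ calculation using this sequence then yields the freeness of $X^\infty_\frq=H^2(\Delta)_\frq$.

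The main obstacle is the vanishing $P^\infty_{\{v\mid\frm\ell\},\frq}=0$: this is the sole point where the restriction to split primes $\ell\nmid 6$ enters, through Gillard's $\mu$-invariant theorem combined with the main conjecture. Once that vanishing is in place the remaining $\Tor$ computations are structural, the universal-coefficient spectral sequence collapses to its $q=0$ row, and the identity $H^i(\Delta\otimes^{\mathbb L}_R k)\simeq H^i(\Delta)\otimes_R k$ follows at once.
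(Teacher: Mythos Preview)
Your spectral-sequence framework matches the paper's, but two points need correction.

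First, the claim that the conclusion is \emph{equivalent} to the $R$-freeness of both $H^1(\Delta)$ and $H^2(\Delta)$ is too strong. In fact the paper only establishes the freeness of $H^2(\Delta)$, and this suffices: the spectral sequence degenerates in degree $2$ automatically, and in degree $1$ yields
\[
0\to\Tor_2^R(H^2(\Delta),k)\to H^1(\Delta)\otimes_R k\to H^1(\Delta\otimes^{\mathbb L}_Rk)\to\Tor_1^R(H^2(\Delta),k)\to 0,
\]
so one needs only $\Tor_i^R(H^2(\Delta),k)=0$ for $i=1,2$. No $\Tor$-group of $H^1(\Delta)$ appears on the diagonals $p+q\in\{1,2\}$, so your effort to prove $H^1(\Delta)$ free is unnecessary. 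Your argument for it is in any case incomplete: you invoke the main conjecture to produce a basis ``provided the torsion quotient \dots\ contributes nothing at $\frq$'', but that proviso is precisely the content that would need proof.

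Second, and more seriously, your route to $P^\infty_\frq=0$ does not work as written. Gillard's theorem asserts that a certain Iwasawa module has vanishing $\mu$-invariant, i.e., that its characteristic ideal is not divisible by $\ell$; this says nothing about whether the particular height-$2$ prime $\frq=\ker(\chi\kappa^j)$ (whose residue field has characteristic zero) lies in the support of $P^\infty$. The paper's argument is different and does not invoke $\mu=0$ at this step: one localizes $R$ further at each height-$1$ prime $\pi$, so that $R_\pi$ is a discrete valuation ring, observes that the image of ${_\fra}\eta_{\frf_{\chi,0}}$ in $H^1(\Delta)_\pi$ is nonzero (because, via Theorems~\ref{motivic} and~\ref{ladicreg}, it computes the nonvanishing leading term $L'(\bar\chi,j)$), and then appeals to \cite[\S5.5]{JLK} to conclude that $(P^\infty_\frq)_\pi=0$; Nakayama's lemma finishes the job. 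The decisive input is thus the \emph{nonvanishing of the elliptic unit in $H^1$ after localization at height-one primes}, not a $\mu$-invariant statement. Once $P^\infty_\frq=0$, one has $H^2(\Delta)\cong X^\infty_\frq$, which is free, and the lemma follows.
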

 \begin{proof}
Indeed, if $(x,y)$ is a regular sequence for $R$, then the Koszul complex is the resolution 
$$
0\rightarrow R\stackrel{(\stackrel{x}{y})}{\rightarrow}R\oplus R\stackrel{(y,-x)}{\rightarrow}R\rightarrow k\rightarrow0.
$$
Thus, the homological spectral sequence for Tor degenerates to give an isomorphism 
$$H^2(\Delta\otimes_R^{\mathbb{L}}k)\simeq H^2(\Delta)\otimes k$$
 and in degree 1 an exact sequence
$$
0\rightarrow\Tor_2(H^2(\Delta),k)\rightarrow H^1(\Delta)\otimes k\rightarrow  H^1(\Delta\otimes^{\mathbb{L}}_Rk)\rightarrow\Tor_1(H^2(\Delta),k)\rightarrow0.
$$
Now, the second degree cohomology is given by an exact sequence where the quotient is a free module (lemma \ref{cohomologycomp})
$$
0\rightarrow P^\infty_{\{v\mid\frm\ell\}}\rightarrow H^2(\Delta^\infty)\rightarrow X^\infty_{\{v\mid\frm\ell\infty\}}\rightarrow0.
$$
Again, localization is exact, so we must show that the higher torsion groups of the localized class groups are zero.  As $R$ is a 2-dimensional local ring, the localization $R_\pi$ at a height 1 prime is a DVR, and the image of ${_\fra}\eta_{\frf_{\chi,0}}$ in $H^1(\Delta)_\pi$ is non-zero because of its relationship to the non-vanishing $L$-value.  Then, by \cite[Section 5.5]{JLK} , the fitting ideal of $(P^\infty_\frq)_\pi$ vanishes, and so by Nakayama's lemma does $P^\infty_\frq$.   
\end{proof}
By lemma \ref{cohocommute} the isomorphism of determinants
\begin{align*}
\phi:\Det_k(\Delta\otimes_R^{\mathbb L}k)&\rightarrow\Det_k(\Delta(K(\frm))_j\otimes_{\bbZ_\ell[G_\frm]}k)
\end{align*}
can be computed as a map on the cohomology groups
\begin{align*}
\phi:\bigotimes_{i=1}^2H^i(\Delta)\otimes k&\rightarrow\bigotimes_{i=1}^2H^i(\Delta\otimes^{\mathbb{L}}_R k)\\
&\rightarrow\bigotimes_{i=1}^2H^i(\Delta(K(\frm))_j)\otimes_{\bbQ_\ell[G]} k.
\end{align*}

To compute $\phi(\mathcal{L}\otimes1)$, we consider the elements ${_\fra}\eta_{\frm_0}$ and $\sigma$ independently.  Recall that for an ideal $\frd\mid\frm_0$
$$
N_\frd:=\sum_{\tau\in\Gal(K(\frm_0)/K(\frd))}\tau.
$$
When $\frf_{\chi,0}\mid\frd$, $N_\frd$ is invertible is the ring $R$ since $\chi(N_\frd)=[K(\frm_0):K(\frd)]$.  Thus, in the localized module $\Delta$, the norm compatibility properties of the elliptic units give the equality
\begin{align}
{_\fra}\eta_{\frm_0}=&N_{\frf_{\chi,0}}^{-1}N_{\frf_{\chi,0}}{_\fra}\eta_{\frm_0}\label{localeta}\\
=&N_{\frf_{\chi,0}}^{-1}\prod_{\frp\mid\frm_0,\frp\nmid\frf_{\chi,0}}(1-\Frob_\frp^{-1})(w_{\frm_0}/w_{\frf_{\chi,0}}){_\fra}\eta_{\frf_{\chi,0}}\nonumber\\
=&\begin{multlined}[t](w_{\frm_0}/w_{\frf_{\chi,0}})\left(\sum_{\tau\in\Gal(K(\frm)/K(\frm_0\ell^{\mu^\prime}))}\tau\right)\left(\sum_{\tau\in\Gal(K(\frm)/K(\frm_0\ell^{\mu^\prime}))}\tau\right)^{-1}\nonumber\\
\\N_{\frf_{\chi,0}}^{-1}\prod_{\frp\mid\frm_0,\frp\nmid\frf_{\chi,0}}(1-\Frob_\frp^{-1}){_\fra}\eta_{\frf_{\chi,0}}\end{multlined}\nonumber\\
=&\big(\frac{w_{\frm_0\ell^{\mu^\prime}}}{w_{\frf_\chi}[K(\frm):K(\frf_\chi)]}\big)\Tr_{K(\frm)/K(\frm_0\ell^{\mu^\prime})}\prod_{\frp\mid\frm_0,\frp\nmid\frf_{\chi,0}}(1-\Frob_\frp^{-1}){_\fra}\eta_{\frf_{\chi,0}}.\nonumber
\end{align}
The last equality in \eqref{localeta} can be deduced from the diagram of fields below. 
$$
\xymatrix{
 & K(\frm_0\ell^{\mu^\prime})\ar@{-}[d]^{\frac{w_{\frm_0\ell^{\mu^\prime}}w_{\frf_{\chi,0}}}{w_{\frm_0}w_{\frf_\chi}}}\\
 & K(\frm_0)K(\frf_\chi) \ar@{-}[dl]\ar@{-}[dr]\\
K(\frm_0)\ar@{-}[dr]&&K(\frf_{\chi})\ar@{-}[dl]\\
&K(\frf_{\chi,0})
}
$$
Thus, by Lemma \ref{twisting} 
\begin{align*}
\phi({_\fra}\eta_{\frm_0})=&(w_{\frm_0\ell^{\mu^\prime}}/w_{\frf_\chi})[K(\frm):K(\frf_\chi)]^{-1}\prod_{\frp\mid\frm_0,\frp\nmid\frf_{\chi,0}}(1-\chi(\frp)\cN\frp^{-j})\\
&\cdot\Tr_{K(\frm)/K(\frm_0\ell^{\mu^\prime})}(\Tr_{K(\frm_0\ell^n)/K(\frm)}{_\fra}z_{\frf_{\chi,0}\ell^n}\otimes\zeta_{\ell^n}^{\otimes-j})_n\\
=&[K(\frm):K(\frf_\chi)]^{-1}\prod_{\frp\mid\frm_0,\frp\nmid\frf_{\chi,0}}(1-\chi(\frp)\cN\frp^{-j})\\
&(\Tr_{K(\frf_{\chi,0}\ell^n)/K(\frf_\chi)}{_\fra}z_{\frf_{\chi,0}\ell^n}\otimes\zeta_{\ell^n}^{\otimes-j})_n.
\end{align*}
The second equality follows from a similar diagram of fields
$$
\xymatrix{
 & K(\frm_0\ell^n)\ar@{-}[d]^{\frac{w_{\frf_\chi}}{w_{\frm_0\ell^{\mu^\prime}}}}\\
 & K(\frm_0\ell^{\mu^\prime})K(\frf_{\chi,0}\ell^n) \ar@{-}[dl]\ar@{-}[dr]\\
K(\frm_0\ell^{\mu^\prime})\ar@{-}[dr]&&K(\frf_{\chi,0}\ell^n)\ar@{-}[dl]\\
&K(\frf_{\chi})
}
$$
where we recall that we take $\frm$ and $n$ to be large enough that $w_{\frm_0}=1$ and $w_{\frf_{\chi,0}\ell^n}=1$.  For the second degree cohomology, the situation is somewhat more simple.  Indeed, by lemma \ref{twisting},
\begin{align*}
\phi(\sigma)=&e_\chi(\sigma_\frm\otimes\zeta_{\ell^n}^{\otimes-j})_n\\
=&e_\chi\sigma_\frm\otimes\zeta_{\ell^\infty}^{\otimes-j}.
\end{align*}
Recalling that $\sigma_\frm$ was our fixed choice of embedding $\tau_0$ and multiplying by $\cN\fra-\sigma(\fra)$, we see that in fact
\begin{multline}
\phi(\mathcal{L})=[K(\frm):K(\frf_\chi)]\prod_{\frp\mid\frm_0,\frp\nmid\frf_{\chi,0}}(1-\chi(\frp)\cN\frp^{-j})^{-1}\\(\Tr_{K(\frf_{\chi,0}\ell^n)/K(\frf_\chi)}z_{\frf_{\chi,0}\ell^n}\otimes\zeta_{\ell^n}^{\otimes-j})^{-1}_n\otimes \zeta_{\ell^\infty}^{\otimes-j}\cdot e_\chi\tau_0.
\end{multline}
Since $\chi(\frp)=0$ for $\frp\mid\frf_\chi$ and 2 is a unit $\Lambda_{\frq}$ we have proved that $\phi(\mathcal{L}\otimes1)=({_A}\vartheta_\ell\circ{_A}\vartheta_\infty(L^*({_A}M,0)^{-1}))^\#$.  By its relation to the $L$-value established in theorem \ref{lvaluebasis}, the image of $(\cN\fra-\sigma(\fra))^{-1}{_\fra}\eta_{\frm_0}$ does not vanish and thus is a basis of $H^1(\Delta(K(\frm))_j)\otimes\bbQ_\ell(\chi)$.  Further, the image of $\sigma$ is a basis of $H^2(\Delta(K(\frm))_j)\otimes\bbQ_\ell(\chi)$, completing the proof of the theorem. \end{proof}

\bibliographystyle{amsalpha}

\end{document}